\newcommand{\numberseries}{\bfseries}   
\newlength{\thmtopspace}                
\newlength{\thmbotspace}                
\newlength{\thmheadspace}               
\newlength{\thmindent}                  
\renewcommand{\subparagraph}{\vspace*{\thmbotspace}}
\newtheoremstyle{bfupright head,slanted body}
                {\thmtopspace}{\thmbotspace}
                {\slshape}{\thmindent}{\bfseries}{.}{\thmheadspace}
                {{\numberseries \thmnumber{#2\;}}\thmnote{#3}}
\newtheoremstyle{bfupright head,upright body}
                {\thmtopspace}{\thmbotspace}
                {\upshape}{\thmindent}{\bfseries}{.}{\thmheadspace}
                {{\numberseries \thmnumber{#2\;}}\thmnote{#3}}
\newtheoremstyle{bfit head,upright body}
                {\thmtopspace}{\thmbotspace}
                {\upshape}{\thmindent}{\upshape}{.}{\thmheadspace}
                {{\numberseries\thmnumber{#2\;}}
                {\bfseries\itshape\thmnote{\negthickspace#3}}}
\newtheoremstyle{it head,upright body}
                {\thmtopspace}{\thmbotspace}
                {\upshape}{\thmindent}{\upshape}{.}{\thmheadspace}
                {{\numberseries\thmnumber{#2\;}}
                {\itshape\thmnote{\negthickspace#3}}}
\newtheoremstyle{fixed bf head,slanted body}
                {\thmtopspace}{\thmbotspace}{\slshape}
                {\thmindent}{\bfseries}{.}{\thmheadspace}
                {{\numberseries \thmnumber{#2\;}}\thmname{#1}\thmnote{ (#3)}}
\newtheoremstyle{fixed bf head,upright body}
                {\thmtopspace}{\thmbotspace}{\upshape}
                {\thmindent}{\bfseries}{.}{\thmheadspace}
                {{\numberseries \thmnumber{#2\;}}\thmname{#1}\thmnote{ (#3)}}
\newtheoremstyle{fixed bfit head,upright body}
                {\thmtopspace}{\thmbotspace}{\upshape}
                {\thmindent}{\bfseries\itshape}{.}{\thmheadspace}
                {{\numberseries \thmnumber{#2\;}}\thmname{#1}\thmnote{ (#3)}}
\newtheoremstyle{sc head,small body}
                {\thmtopspace}{\thmbotspace}
                {\small\upshape}{\thmindent}{\scshape}{.}{\thmheadspace}
                {\thmname{#1}}
\newtheoremstyle{numbered paragraph}
                {\thmtopspace}{\thmbotspace}{\upshape}
                {\thmindent}{\upshape}{}{0pt}
                {{\numberseries \thmnumber{#2\;}}}
\newtheoremstyle{unnumbered paragraph}
                {\thmtopspace}{\thmbotspace}{\upshape}
                {\parindent}{\upshape}{}{0pt}
  \theoremstyle{bfupright head,slanted body}
  \newtheorem{res}{}[section]             \newtheorem*{res*}{}
\theoremstyle{sc head,small body}
\theoremstyle{fixed bf head,slanted body}
\newtheorem{thm}[res]{Theorem}          \newtheorem*{thm*}{Theorem}
\newtheorem{prp}[res]{Proposition}      \newtheorem*{prp*}{Proposition}
\newtheorem{cor}[res]{Corollary}        \newtheorem*{cor*}{Corollary}
\newtheorem{lem}[res]{Lemma}            \newtheorem*{lem*}{Lemma}
\theoremstyle{fixed bf head,upright body}
       \newtheorem*{dfn*}{Definition}
\newtheorem{obs}[res]{Observation}      \newtheorem*{obs*}{Observation}
\newtheorem{rmk}[res]{Remark}           \newtheorem*{rmk*}{Remark}
\newtheorem{exa}[res]{Example}          \newtheorem*{exa*}{Example}
\newlength{\thmlistleft}        
\newlength{\thmlistright}       
\newlength{\thmlistpartopsep}   
\newlength{\thmlisttopsep}      
\newlength{\thmlistparsep}      
\newlength{\thmlistitemsep}     
\newcounter{eqc} 
  {\end{list}}%
\newcounter{prt}
\newenvironment{prt}{\begin{list}{\upshape (\alph{prt})}%
    {\usecounter{prt}%
      \setlength{\leftmargin}{\thmlistleft}%
      \setlength{\labelwidth}{\thmlistleft}%
      \setlength{\rightmargin}{\thmlistright}%
      \setlength{\partopsep}{\thmlistpartopsep}%
      \setlength{\topsep}{\thmlisttopsep}%
      \setlength{\parsep}{\thmlistparsep}%
      \setlength{\itemsep}{\thmlistitemsep}}}%
  {\end{list}}%
\renewcommand{\eqref}[1]{(\pgref{eq:#1})}
\newcommand{\pgref}[1]{\ref{#1}}
\newcommand{\thmref}[2][Theorem~]{#1\pgref{thm:#2}}
\newcommand{\corref}[2][Corollary~]{#1\pgref{cor:#2}}
\newcommand{\prpref}[2][Proposition~]{#1\pgref{prp:#2}}
\newcommand{\lemref}[2][Lemma~]{#1\pgref{lem:#2}}
\newcommand{\obsref}[2][Observation~]{#1\pgref{obs:#2}}
\newcommand{\exaref}[2][Example~]{#1\pgref{exa:#2}}
\newcommand{\rmkref}[2][Remark~]{#1\pgref{rmk:#2}}
\newcommand{\secref}[2][Section~]{#1\ref{sec:#2}}
\newcommand{\thmcite}[2][?]{\cite[thm.~#1]{#2}}
\newcommand{\corcite}[2][?]{\cite[cor.~#1]{#2}}
\newcommand{\prpcite}[2][?]{\cite[prop.~#1]{#2}}
\newcommand{\lemcite}[2][?]{\cite[lem.~#1]{#2}}
\numberwithin{equation}{res}
\newcommand{\set}[2][\,]{\{#1 #2 #1\}}
\newcommand{\setof}[3][\,]{\{#1#2 \mid #3#1\}}
\newcommand{\NN}{\mathbb{N}}
\newcommand{\qtext}[1]{\quad\text{#1}\quad}
\newcommand{\qand}{\qtext{and}}
 \newcommand{\deq}{\:=\:}
 \newcommand{\f}{\varphi}
\newcommand{\m}{\mathfrak{m}}
\newcommand{\n}{\mathfrak{n}}
\newcommand{\q}{\mathfrak{q}}
\newcommand{\mult}[1]{\operatorname{e}(#1)}
\newcommand{\is}{\cong}
\newcommand{\qis}{\simeq}
\renewcommand{\le}{\leqslant}
\renewcommand{\ge}{\geqslant}
\newcommand{\onto}{\twoheadrightarrow}
\newcommand{\into}{\hookrightarrow}
\newcommand{\lra}{\longrightarrow}
\newcommand{\xra}[2][]{\xrightarrow[#1]{\;#2\;}}
\newcommand{\dra}[2]{\xra{\dif[#1]{#2}}}
\newcommand{\poly}[2][k]{#1[#2]}
\newcommand{\pows}[2][k]{#1[\mspace{-2.3mu}[#2]\mspace{-2.3mu}]}
\newcommand{\Rm}{(R,\m)}
\newcommand{\Rmk}{(R,\m,k)}
\newcommand{\Sn}{(S,\n)}
\newcommand{\Rhat}{\widehat{R}}
\newcommand{\Shat}{\widehat{S}}
\newcommand{\mapdef}[4][\rightarrow]{\nobreak{#2\colon #3 #1 #4}}
 \newcommand{\dif}[2][]{{\partial}_{#2}^{#1}}
\newcommand{\HH}[2][]{\operatorname{H}^{#1\!}#2}
\newcommand{\HHp}[2][]{\operatorname{H}^{#1}(#2)}
\newcommand{\Shift}[2][]{\mathsf{\Sigma}^{#1}{#2}}
\newcommand{\Shiftp}[2][]{(\Shift[#1]{#2})}
\newcommand{\dimR}{\operatorname{dim}R}
\renewcommand{\dim}[2][R]{\operatorname{dim}_{#1}#2}
\newcommand{\edim}[1]{\operatorname{edim}#1}
\newcommand{\codim}[1]{\operatorname{codim}#1}
 \newcommand{\dpt}[2][R]{\operatorname{depth}_{#1}#2}
\newcommand{\lgt}[2][R]{\operatorname{length}_{#1}#2}
\newcommand{\pd}[2][R]{\operatorname{proj.\!dim}_{#1}#2}
\newcommand{\RHom}[3][R]{\operatorname{\mathbf{R}Hom}_{#1}(#2,#3)}
\newcommand{\Ext}[4][R]{\operatorname{Ext}_{#1}^{#2}(#3,#4)}
\newcommand{\tp}[3][R]{\nobreak{#2\otimes_{#1}#3}}
\newcommand{\Ltp}[3][R]{\nobreak{#2\otimes_{#1}^{\mathbf{L}}#3}}
\newcommand{\Tor}[4][R]{\operatorname{Tor}^{#1}_{#2}(#3,#4)}
\def\urltilda{\kern -.15em\lower .7ex\hbox{\~{}}\kern .04em} 
\def\@nobreak@#1{\mathchoice%
  {\nobreakdef@\displaystyle\f@size{#1}}%
  {\nobreakdef@\nobreakstyle\tf@size{\firstchoice@false #1}}%
  {\nobreakdef@\nobreakstyle\sf@size{\firstchoice@false #1}}%
  {\nobreakdef@\nobreakstyle\ssf@size{\firstchoice@false #1}}%
  \check@mathfonts}%
\def\nobreakdef@#1#2#3{\hbox{{%
                    \everymath{#1}%
                    \let\f@size#2\selectfont%
                    #3}}}%
\newenvironment{condition}[1][1.5em]{\begin{list}{}%
    {\setlength{\leftmargin}{#1}\setlength{\rightmargin}{#1}%
      \setlength{\partopsep}{0pt}%
      \setlength{\topsep}{0.6\thmbotspace}%
      \setlength{\parsep}{0pt}%
      \setlength{\itemsep}{0pt}}
      \item[]\small}
    {\end{list}}%
 \newcommand{\ab}[1]{b_{#1}}
 \newcommand{\ac}{\textsc{(ac)}}
 \newcommand{\uac}{\textsc{(uac)}}
 \newcommand{\acuac}{\textsc{(ac/uac)}}
 \newcommand{\syz}[3][R]{\operatorname{syz}^#1_{#2}(#3)}
 \newcommand{\x}{\pmb{x}}
\begin{document}

 \title[Vanishing of cohomology over {C}ohen--{M}acaulay
 rings]{Vanishing of cohomology\\ over {C}ohen--{M}acaulay rings}

 \author{Lars Winther Christensen}

 \address{Department of Math.\ and Stat., Texas Tech University,
   Lubbock, TX 79409, U.S.A.}

 \email{lars.w.christensen@ttu.edu}

 \urladdr{http://www.math.ttu.edu/\urltilda lchriste}

 \author{Henrik Holm}

 \address{Department of Basic Sciences and Environment, University of
   Copenhagen, Thorvaldsensvej 40, DK-1871 Frederiksberg C, Denmark}

 \email{hholm@life.ku.dk}

 \urladdr{http://www.matdat.life.ku.dk/\urltilda hholm}

 \date{23 February 2012}

 \keywords{AC ring, AB ring, Auslander's condition, Ext-index,
   c.i.~local homomorphism}

 \subjclass[2010]{13D07, 13H10}

\begin{abstract} 
  A 2003 counterexample to a conjecture of Auslander brought attention
  to a family of rings---colloquially called AC rings---that satisfy a
  natural condition on vanishing of cohomology. Several results attest
  to the remarkable homological properties of AC rings, but their
  definition is barely operational, and it remains unknown if they
  form a class that is closed under typical constructions in ring
  theory. In this paper, we study transfer of the AC property along
  local homomorphisms of Cohen--Macaulay rings. In particular, we show
  that the AC property is preserved by standard procedures in local
  algebra. Our results also yield new examples of Cohen--Macaulay AC
  rings.
\end{abstract}

\maketitle

\section*{Introduction}

\noindent
Vanishing of Ext groups and functors play a crucial role in the study
of rings and their modules. For a fixed module $M$, vanishing of
$\Ext[]{n}{M}{N}$ for all modules $N$ and integers $n \gg 0$ says that
$M$ has finite projective dimension, in which case the functors
$\Ext[]{n}{M}{-}$ vanish for all $n > \pd[]{(M)}$. Finiteness of
$\pd{(M)}$ for all finitely generated $R$-modules is a lot to ask from
a ring---in the noetherian case it means that $R$ is regular.
Auslander conjectured that for every finitely generated module $M$,
also those of infinite projective dimension, there would be a sort of
an upper bound for non-vanishing of the groups $\Ext[]{n}{M}{N}$; see
\cite[ch.~V]{mas1}. The exact conjecture is that every Artin algebra
$R$ satisfies the following:

\begin{condition}
  \ac\ For every finitely generated $R$-module $M$ there
  exists an integer $\ab{M} \ge 0$ such that for every finitely
  generated $R$-module $N$ one has: $\Ext{n}{M}{N}=0$ for $n \gg 0$
  implies $\Ext{n}{M}{N}=0$ for $n > \ab{M}$.
\end{condition}

\noindent An integer $\ab{M}$ with this property is called an
\emph{Auslander bound} for $M$; the least such bound may be perceived
as a ``latent projective dimension'' of $M$.

An example, discovered by Jorgensen and \c{S}ega \cite{DAJLMS04},
disproved Auslander's conjecture. However, several classes of rings do
satisfy Auslander's condition \ac, and remarkable homological
properties that flow from \ac\ have been uncovered, for example, in
work of Huneke and Jorgensen \cite{CHnDAJ03}; see also \cite{LWCHHlc}.

The counterexample in \cite{DAJLMS04} is a commutative Gorenstein
local ring, which is a finite dimensional algebra, and even
Koszul. Nevertheless, the condition \ac\ is satisfied by all
commutative local rings that are complete intersection or Golod, and
that is striking: with regard to homological characteristics these two
classes of rings usually belong on opposite sides of the spectrum.

Among commutative noetherian local rings, those that satisfy \ac\ are
emerging as a family with intriguing homological properties, albeit
one whose position relative to the traditional classes of rings is not
easily described. In this paper, we give new examples---explicit and
abstract---of Cohen--Macaulay local rings that satisfy \ac.

\begin{center}
  $*\:*\:*$
\end{center}

From this point on, all rings considered are commutative, noetherian,
and local. In particular, $(R,\m)$ is such a ring, and $\Rhat$ denotes
its $\m$-adic completion.

Most Cohen--Macaulay rings of finite CM type are Golod and, therefore,
known from \prpcite[1.4]{DAJLMS04} to satisfy even a uniform version
of Auslander's condition:
\begin{condition}
  \uac\ There exists an integer $b \ge 0$ such that for all
  finitely generated $R$-modules $M$ and $N$ one has:
  $\Ext{n}{M}{N}=0$ for $n\gg 0$ implies $\Ext{n}{M}{N}=0$ for~$n >
  b$.
\end{condition}
\noindent In \secref{cm} we show by a direct argument that every
Cohen--Macaulay ring of finite CM type satisfies \uac.

An explicit collection of new examples of rings that satisfy \uac\ is
constructed via results about preservation of the \ac\ and \uac\
properties under standard operations in local algebra---such as
completion and reduction modulo regular elements. These results were
advertised in \cite[rmk.~5.7]{LWCHHlc}; they are proved in
\secref{stab}, and the examples follow in \secref{example}.

The canonical maps \mbox{$R \into \Rhat$} and \mbox{$R\onto R/(x)$},
where $x$ is a regular element, are archetypes of local homomorphisms
of finite flat dimension; they are even c.i.\ homomorphisms in the
sense of Avramov~\cite{LLA99}. A classical chapter of local algebra
studies transfer of ring theoretic properties along local
homomorphisms. In \secref{homos} we add to it as we prove,
essentially, that the \ac\ and \uac\ properties descend along
homomorphisms of finite flat dimension and ascend along
c.i. homomorphisms.

\section{Maximal Cohen--Macaulay modules}
\label{sec:cm}

For a finitely generated $R$-module $M$, the $n$th syzygy in its
minimal free resolution is denoted $\syz{n}{M}$. For $i > n \ge 0$
there are isomorphisms:
\begin{equation}
  \label{eq:is1}
  \Ext{i-n}{\syz{n}{M}}{N} \is \Ext{i}{M}{N}.
\end{equation}

Let $R$ be Cohen--Macaulay of dimension $d$. A finitely generated
$R$-module $M$ is called \emph{maximal Cohen--Macaulay}, abbreviated
MCM, if the equality $\dpt{M}=d$ holds. Every $d$th syzygy of a
finitely generated $R$-module is either $0$ or an MCM module, and
every syzygy of an MCM module is an MCM module; see \prpcite[(1.16)
and (1.3)]{yos}. These facts together with the theory of MCM
approximations, which is due to Auslander and Buchweitz
\cite{MAsROB89}, are central to this section.

\begin{rmk}
  \label{rmk:mcmab}
  Let $R$ be Cohen--Macaulay of dimension $d$. Dimension shifting
  \eqref{is1} shows that $R$ satisfies \ac\ if and only if every MCM
  $R$-module has an Auslander bound. Every MCM module is a (finite)
  direct sum of indecomposable MCM modules, so $R$ satisfies \ac\ if
  and only if every indecomposable MCM $R$-module has an Auslander
  bound. In particular, $R$ satisfies \uac\ if and only if the
  indecomposable MCM $R$-modules have a common Auslander bound.
\end{rmk}

A Cohen--Macaulay ring $R$ is said to be of \emph{finite CM type} if
there are only finitely many indecomposable MCM $R$-modules, up to
isomorphism.

\begin{thm}
  \label{thm:cm}
  A Cohen--Macaulay local ring of finite CM type satisfies \uac.
\end{thm}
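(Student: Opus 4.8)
The plan is to invoke \rmkref{mcmab} to reduce the statement to a finite combinatorial problem: since $R$ has finite CM type, it suffices to produce one integer that serves as an Auslander bound for every indecomposable MCM $R$-module at once. Such a bound will be extracted from the way the first syzygy operation acts on the finite family of indecomposable MCM modules, viewed as a dynamical system on a finite set.

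Concretely, I would fix a list $M_1,\dots,M_t$ of representatives of the isomorphism classes of indecomposable MCM $R$-modules; the list is finite by hypothesis, and $R$ itself occurs in it, being indecomposable (a local ring has no nontrivial idempotents) and MCM (because $R$ is Cohen--Macaulay). For each index $i$ the minimal syzygy $\syz{1}{M_i}$ is a syzygy of an MCM module, hence MCM, and since a direct summand of an MCM module is again MCM, there are nonnegative integers $a_{ij}$, not necessarily unique, with $\syz{1}{M_i} \is \bigoplus_{j=1}^{t} M_j^{a_{ij}}$; here $\syz{1}{M_i}$ is zero when $M_i\is R$. Dimension shifting \eqref{is1} then gives, for every finitely generated $R$-module $N$ and every $n\ge 2$,
\[
  \Ext{n}{M_i}{N} \is \Ext{n-1}{\syz{1}{M_i}}{N} \is \bigoplus_{j=1}^{t}\Ext{n-1}{M_j}{N}^{a_{ij}}.
\]

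Given $N$, set $S_n = \setof{i}{\Ext{n}{M_i}{N}\ne 0} \subseteq \set{1,\dots,t}$ for $n\ge 1$. The displayed isomorphism shows $S_n = f(S_{n-1})$ for all $n\ge 2$, where $f$ is the self-map of the power set of $\set{1,\dots,t}$ sending $S$ to $\setof{i}{a_{ij}\ne 0 \text{ for some }j\in S}$. Crucially $f$ depends only on the matrix $(a_{ij})$, hence only on $R$, not on $N$. As the power set has $2^t$ elements, the forward orbit $S_1, f(S_1), f^2(S_1),\dots$ becomes periodic after at most $2^t$ steps, so $S_n$ lies on the eventual cycle whenever $n\ge 2^t$. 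Consequently, if $\Ext{n}{M_i}{N}=0$ for $n\gg 0$---equivalently $i\notin S_n$ for all large $n$---then $i$ belongs to no set on that cycle, and therefore $i\notin S_n$, i.e.\ $\Ext{n}{M_i}{N}=0$, for \emph{every} $n\ge 2^t$. Thus $b=2^t-1$ is an Auslander bound valid simultaneously for $M_1,\dots,M_t$, and \rmkref{mcmab} yields that $R$ satisfies \uac.

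I do not anticipate a genuine obstacle; the argument is short once the setup is in place. The one point that must be handled correctly is the uniformity in $N$, and that is precisely the observation that the transition rule $f$---equivalently, the way each $\syz{1}{M_i}$ splits into indecomposables---is an invariant of $R$. The bound $2^t-1$ is crude; running the same reduction through the $d$th syzygy as in \rmkref{mcmab} shows that $2^t-1+d$ is in fact a uniform Auslander bound for \emph{all} finitely generated $R$-modules, where $d=\dimR$.
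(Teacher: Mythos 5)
Your argument is correct, and it reaches the conclusion by a genuinely different combinatorial route than the paper. Both proofs start from \rmkref{mcmab} and from the same structural facts---that $\syz{1}{M_i}$ is MCM and therefore splits into copies of the finitely many indecomposables $M_1,\dots,M_t$, and that \eqref{is1} transports non-vanishing of Ext along this splitting. But where you encode this data as a self-map $f$ of the power set of $\set{1,\dots,t}$ and extract the bound from eventual periodicity of the orbit of $S_1=\setof{i}{\Ext{1}{M_i}{N}\ne 0}$, the paper instead runs an induction on the number $n(X)$ of distinct indecomposables occurring among \emph{all} syzygies of $X$, splitting into the case where $M$ recurs as a summand of some $\syz{l}{M}$ (forcing Auslander bound $0$) and the case where it does not (so that $n(\syz{1}{M})$ drops and the induction hypothesis applies). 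The paper's induction is shorter and produces no explicit bound---it only yields \emph{some} bound for each $M_i$, with uniformity coming for free from finiteness of the list; your version is slightly longer to set up but buys an explicit uniform bound $2^t-1$ (and $2^t-1+d$ over all finitely generated modules), and it cleanly isolates the one point that matters for uniformity in $N$, namely that the transition rule $f$ is an invariant of $R$. One small remark: your use of a decomposition $\syz{1}{M_i}\is\bigoplus_j M_j^{a_{ij}}$ only requires \emph{existence} of such a decomposition, not uniqueness, so no Krull--Schmidt hypothesis is needed---you say as much, and it is worth keeping that caveat, since $R$ is not assumed complete or henselian.
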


\begin{proof}
  Let $R$ be Cohen--Macaulay of finite CM type.  By \rmkref{mcmab}, it
  suffices to prove that each of the finitely many indecomposable MCM
  $R$-modules has an Auslander bound. For an MCM $R$-module $X$, let
  $n(X)$ be the number of distinct, up to isomorphism, indecomposable
  MCM $R$-modules that occur as direct summands of any one of the
  syzygies $\syz{i}{X}$, for $i \ge 0$.

  Let $M$ be an indecomposable MCM $R$-module. If $n(M)$ is $1$, then
  $M$ is either free of rank $1$ or a direct summand of every syzygy
  module $\syz{i}{M}$ for $i \ge 0$, whence it has Auslander bound
  $0$; cf.~\eqref{is1}. Let $t \ge 1$ and assume that every MCM
  $R$-module $X$ with $n(X) \le t$ has an Auslander bound. Assume
  $n(M) = t+1$. If $M$ occurs as a direct summand of one of its
  syzygies, say, $\syz{l}{M}$ for some $l \ge 1$, then---by uniqueness
  of minimal free resolutions---$M$ occurs as a summand of infinitely
  many syzygies, namely $\syz{lm}{M}$ for all $m\in\NN$; whence $M$
  has Auslander bound $0$. If $M$ does not occur as a direct summand
  of $\syz{i}{M}$ for any $i \ge 1$, then one has $n(\syz{1}{M}) \le
  t$. By the induction hypothesis, $\syz{1}{M}$ has an Auslander
  bound, whence $M$ has one, again by dimension shifting \eqref{is1}.
\end{proof}

The \emph{embedding dimension} of $R$, $\edim{R}$, is the minimal
number of generators of its maximal ideal, and the \emph{codimension}
is the difference $\codim{R} = \edim{R} - \dimR$. The multiplicity
$\mult{R}$ of a Cohen--Macaulay ring is at least $\codim{R} +1 $; if
equality holds, then $R$ is said to have \emph{minimal multiplicity.}

\begin{rmk}
  Eisenbud and Herzog \cite{DEsJHr88} raise the question whether every
  complete Cohen--Macaulay ring of finite CM type and dimension at
  least $2$ has minimal multiplicity. Cohen--Macaulay rings of minimal
  multiplicity are Golod, see Avramov~\prpcite[5.2.4]{ifr}, and
  satisfy \uac\ for a strong reason, see \prpcite[1.4]{DAJLMS04}.
\end{rmk}

\begin{obs}
  \label{obs:Syz}
  Let $R$ be Cohen--Macaulay and assume that it has a dualizing
  module.  By \thmcite[A]{MAsROB89}, every finitely generated
  $R$-module $N$ has a maximal Cohen--Macaulay approximation: an exact
  sequence of finitely generated $R$-modules,%
  \begin{equation*}
    0 \lra I \lra Y \lra N \lra 0,
  \end{equation*}
  where $Y$ is MCM, and $I$ is of finite injective dimension.  If $X$ is
  an MCM module, then one has $\Ext{n}{X}{I} =0$ for $n\ge 1$; see
  Yoshino \corcite[(1.13)]{yos} and \corcite[6.4]{MAsROB89}. Thus, for
  all $n \ge 1$ there are isomorphisms
  \begin{equation*}
    \Ext{n}{X}{Y} \is \Ext{n}{X}{N}.
  \end{equation*}
  From \eqref{is1} and the displays above, it is not hard to see that
  it suffices to verify the conditions \ac\ and \uac\ on MCM
  modules. That is, $R$ satisfies \ac\ if and only if it satisfies:
  \begin{condition}
    For every MCM $R$-module $X$ there exists $\ab{X} \ge 0$ such that
    for every MCM $R$-module $Y$ one has: $\Ext{n}{X}{Y}=0$ for $n\gg
    0$ implies $\Ext{n}{X}{Y}=0$ for~$n> \ab{X}$.
  \end{condition}
  And $R$ satisfies \uac\ if and only if it satisfies:
  \begin{condition}
    There exists an integer $b \ge 0$ such that for all MCM
    $R$-modules $X$ and $Y$ one has: $\Ext{n}{X}{Y}=0$ for $n\gg 0$
    implies $\Ext{n}{X}{Y}=0$ for $n> b$.
  \end{condition}
\end{obs}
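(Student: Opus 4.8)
The plan is to deduce both biconditionals from the maximal Cohen--Macaulay approximation recalled above, together with the dimension-shifting reduction of \rmkref{mcmab}. In each case the ``only if'' direction is immediate, since the two displayed conditions are simply the restrictions of \ac\ and \uac\ to MCM test modules; so all the work lies in the ``if'' directions, and it rests on a single observation. For an MCM module $X$ and \emph{any} finitely generated module $N$ with approximation $0 \to I \to Y \to N \to 0$, the long exact sequence induced by $\Hom{X}{-}$ contains $\Ext{n}{X}{I} \to \Ext{n}{X}{Y} \to \Ext{n}{X}{N} \to \Ext{n+1}{X}{I}$, whose outer terms vanish whenever $n \ge 1$ by the cited results of Yoshino and Auslander--Buchweitz; this yields the isomorphism $\Ext{n}{X}{Y} \is \Ext{n}{X}{N}$ for $n \ge 1$. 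In particular $\Ext{n}{X}{N} = 0$ for $n \gg 0$ if and only if $\Ext{n}{X}{Y} = 0$ for $n \gg 0$, and if $\Ext{n}{X}{Y}$ vanishes for all $n > c$ with $c \ge 0$, then so does $\Ext{n}{X}{N}$, since every such $n$ satisfies $n \ge 1$.

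For the ``if'' direction of the \ac\ statement, I would assume the MCM condition and verify the hypothesis of \rmkref{mcmab}: that every MCM module has an Auslander bound tested against all finitely generated modules. Let $X$ be MCM and let $\ab{X} \ge 0$ be the bound supplied by the MCM condition. Given a finitely generated $N$ with $\Ext{n}{X}{N} = 0$ for $n \gg 0$, choose an approximation $0 \to I \to Y \to N \to 0$; then $\Ext{n}{X}{Y} = 0$ for $n \gg 0$, hence $\Ext{n}{X}{Y} = 0$ for $n > \ab{X}$, hence $\Ext{n}{X}{N} = 0$ for $n > \ab{X}$ by the previous paragraph. So $\ab{X}$ is an Auslander bound for $X$ in the sense of \ac, and \rmkref{mcmab} gives that $R$ satisfies \ac. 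The ``if'' direction of the \uac\ statement is the identical argument with a single integer $b$ in place of each $\ab{X}$.

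I do not anticipate a genuine obstacle. The only delicate point is keeping the two variables in $\Ext{n}{X}{Y}$ apart: the reduction on the first variable, from an arbitrary finitely generated module to an MCM one, is already provided by \rmkref{mcmab} through dimension shifting \eqref{is1}, so the Observation need only reduce the second variable, and that is exactly what the maximal Cohen--Macaulay approximation accomplishes, modulo the trivial index bookkeeping that keeps $n$ in the range $n \ge 1$ where $\Ext{n}{X}{Y} \is \Ext{n}{X}{N}$ holds.
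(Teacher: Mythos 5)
Your argument is correct and is essentially the one the paper intends: the first variable is reduced to MCM modules by dimension shifting via \rmkref{mcmab}, and the second variable is reduced via the MCM approximation and the isomorphism $\Ext{n}{X}{Y} \is \Ext{n}{X}{N}$ for $n \ge 1$, with the observation that $n > \ab{X} \ge 0$ keeps $n$ in the valid range. Your justification of that isomorphism by the long exact sequence of $\Hom{X}{-}$ is exactly the step the paper leaves implicit.
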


\section{Completion and reduction modulo regular sequences}
\label{sec:stab}

Let $R$ be Cohen--Macaulay; we can now show that $R$ and $\Rhat$
satisfy \ac\ simultaneously and---in a generalization of
\prpcite[3.3.(1)]{CHnDAJ03}---that $R$ and $R/(x)$ satisfy \ac\
simultaneously, if $x$ is a regular element.

\begin{lem}
  \label{lem:regelt}
  Let $x$ be a regular element in $R$. If $R$ satisfies \acuac, then
  $R/(x)$ satisfies \acuac.
\end{lem}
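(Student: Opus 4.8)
The plan is to compare cohomology over $S = R/(x)$ with cohomology over $R$ and push Auslander bounds across. Since $x$ is a non-zerodivisor, $0 \to R \xra{x} R \to S \to 0$ is a free resolution, so $\pd{S} = 1$; and if $N$ is a finitely generated $S$-module, then $xN=0$ gives $\Ext[R]{0}{S}{N} \is N \is \Ext[R]{1}{S}{N}$ and $\Ext[R]{q}{S}{N} = 0$ for $q \ge 2$. Hence the Cartan--Eilenberg change-of-rings spectral sequence
\begin{equation*}
  \Eterm{p}{q} = \Ext[S]{p}{M}{\Ext[R]{q}{S}{N}} \;\Longrightarrow\; \Ext[R]{p+q}{M}{N}
\end{equation*}
is concentrated in the rows $q=0,1$ and collapses to a long exact sequence
\begin{equation*}
  \cdots \to \Ext[S]{n}{M}{N} \to \Ext[R]{n}{M}{N} \to \Ext[S]{n-1}{M}{N} \xra{\delta} \Ext[S]{n+1}{M}{N} \to \Ext[R]{n+1}{M}{N} \to \cdots,
\end{equation*}
natural in the finitely generated $S$-modules $M$ and $N$, with connecting map $\delta$ the differential $d_2$ (up to sign, the Eisenbud operator of $x$, of cohomological degree $2$). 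Assembling this sequence---classically due to Gulliksen---is the one ingredient I would quote rather than reprove.

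Fix a finitely generated $S$-module $M$, and regard it as an $R$-module. If $R$ satisfies \ac, let $b \ge 0$ be an Auslander bound $\ab{M}$ for $M$ over $R$; if $R$ satisfies \uac, let $b$ be a uniform bound, so that $b$ does not depend on $M$. I claim this $b$ is an Auslander bound for $M$ over $S$. Let $N$ be a finitely generated $S$-module with $\Ext[S]{n}{M}{N} = 0$ for $n \gg 0$. The exact strands $\Ext[S]{n}{M}{N} \to \Ext[R]{n}{M}{N} \to \Ext[S]{n-1}{M}{N}$ of the long exact sequence give $\Ext[R]{n}{M}{N} = 0$ for $n \gg 0$, and therefore $\Ext[R]{n}{M}{N} = 0$ for all $n > b$ by the defining property of $b$.

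It remains to feed this back in. For every $m \ge b$ the outer terms of
\begin{equation*}
  \Ext[R]{m+1}{M}{N} \to \Ext[S]{m}{M}{N} \xra{\delta} \Ext[S]{m+2}{M}{N} \to \Ext[R]{m+2}{M}{N}
\end{equation*}
vanish, so $\delta$ is an isomorphism $\Ext[S]{m}{M}{N} \is \Ext[S]{m+2}{M}{N}$ for all $m \ge b$; thus the modules $\Ext[S]{n}{M}{N}$ are $2$-periodic in $n$ once $n \ge b$. Since they also vanish for all $n \gg 0$, hence for arbitrarily large $n$ of either parity, the periodicity drags that vanishing down: $\Ext[S]{n}{M}{N} = 0$ for every $n \ge b$, in particular for $n > b$. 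Therefore $b$ is an Auslander bound for $M$ over $S$, so $S$ satisfies \ac; and when $b$ was chosen uniformly (the \uac\ case) the same $b$ works for all $M$, so $S$ satisfies \uac. I expect the only genuinely delicate point to be the bookkeeping around the change-of-rings sequence together with the observation that eventual vanishing and eventual $2$-periodicity force honest vanishing from the periodicity threshold onward; Cohen--Macaulayness of $R$ is not used here, though one could alternatively argue after reducing to MCM test modules via \obsref{Syz} when a dualizing module is available.
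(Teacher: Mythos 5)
Your argument is correct, but it runs along a genuinely different track than the paper's. The paper invokes the change-of-rings spectral sequence $\Ext[R/(x)]{q}{\Tor{p}{R/(x)}{M}}{N}\Rightarrow\Ext{p+q}{M}{N}$, which has only the two columns $p=0,1$ (both equal to $\Ext[R/(x)]{q}{M}{N}$, since $\Tor{0}{R/(x)}{M}\is M\is\Tor{1}{R/(x)}{M}$); all differentials therefore vanish and the sequence degenerates into short exact sequences $0\to\Ext[R/(x)]{n-1}{M}{N}\to\Ext{n}{M}{N}\to\Ext[R/(x)]{n}{M}{N}\to 0$, from which both the transfer of eventual vanishing and the conclusion can be read off immediately, yielding the marginally sharper bound $\max\{0,b-1\}$. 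You instead use the companion spectral sequence with $\Ext{q}{R/(x)}{N}$ on the inside, which is concentrated in two rows and so does \emph{not} degenerate: its $d_2$ is the Eisenbud operator, and you must pay for that with the extra step showing that $\delta$ is eventually an isomorphism and that $2$-periodicity plus eventual vanishing forces honest vanishing from the threshold on. That extra step is carried out correctly (the isomorphism holds for every $m\ge b$, so there is no parity issue), and your identifications $\Ext{0}{R/(x)}{N}\is N\is\Ext{1}{R/(x)}{N}$ are right. Your route is essentially the one the authors attribute to Huneke and Jorgensen immediately after their proof (``a slightly different argument \dots\ appeared in the proof of prop.~3.2.(1) of \cite{CHnDAJ03}''); what the paper's choice buys is a collapse to short exact sequences and a one-line finish, while yours showcases the periodicity mechanism that underlies much of the vanishing-of-Ext literature over hypersurface sections. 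You are also right that Cohen--Macaulayness plays no role in this lemma.
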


\begin{proof}
  Let $M$ and $N$ be finitely generated $R/(x)$-modules. The change of
  rings spectral sequence \cite[XVI.\S$5.(2)_3$]{careil}
  \begin{equation*}
    \mathrm{E}_2^{p,q} = \Ext[R/(x)]{q}{\Tor{p}{R/(x)}{M}}{N} \Rightarrow
    \Ext{p+q}{M}{N}
  \end{equation*}
  has zero differentials on the second and all subsequent pages. For
  each $n\ge 0$ it follows from \prpcite[5.5]{careil} that there is an
  exact sequence
  \begin{equation*}
    0 \to \Ext[R/(x)]{n-1}{M}{N} \to \Ext{n}{M}{N} \to
    \Ext[R/(x)]{n}{M}{N}  \to 0.
  \end{equation*}
  If $b$ is an Auslander bound for $M$ as an $R$-module, then it
  follows that $\max\{0,b-1\}$ is an Auslander bound for $M$ as an
  $R/(x)$-module.
\end{proof}
\noindent
A slightly different argument for the result above appeared in the
proof of \prpcite[3.2.(1)]{CHnDAJ03}; the converse for Cohen--Macaulay
local rings is established in \corref[]{cmreg}.

The remaining proofs in this section use computations in the
derived category over $R$; first we recall some notation.  A complex
of $R$-modules is graded cohomologically,
\begin{equation*}
  M \deq \cdots \lra M^{n-1} \dra{n-1}{M} M^{n} \dra{n}{M}
  M^{n+1} \lra \cdots.
\end{equation*}
The suspension of $M$ is the complex $\Shift{M}$ with $\Shiftp{M}^n =
M^{n+1}$ and $\dif{\Shift{M}} = - \dif{M}$. The $n$th cohomology
module of $M$ is denoted \mbox{$\HH[n]{\mspace{-2mu}M}$}. Isomorphisms
in the derived category over $R$ are marked by the symbol~$\qis$; they
induce isomorphisms at the level of homology.  We use the standard
notation, $\RHom{-}{-}$ and $\Ltp{-}{-}$, for the right derived Hom
functor and the left derived tensor product functor. For all
$R$-modules $M$ and $N$ and all integers $n$ there are isomorphisms
\begin{equation*}
  \Ext{n}{M}{N} \is \HH[n]{\RHom[A]{M}{N}} \qand \Tor{n}{M}{N} \is
  \HHp[-n]{\Ltp{M}{N}}.
\end{equation*}

In the next work-hose lemma, the ring $S$ need not be local.

\begin{lem}
  \label{lem:descent}
  Let $\Rm$ be Cohen--Macaulay and assume that it has a dualizing
  module.  Let $S$ be an $R$-algebra of finite flat dimension and with
  $S/\m S \ne 0$. If $S$ satisfies \acuac, then $R$ satisfies
  \acuac.
\end{lem}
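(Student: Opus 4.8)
The plan is to reduce to maximal Cohen--Macaulay modules via \obsref{Syz} and then transfer along $S$. Concretely, it suffices to produce, for each MCM $R$-module $X$, an integer that is an Auslander bound for $X$ against \emph{every} MCM $R$-module $Y$---and, for \uac, one integer serving all $X$. So fix MCM $R$-modules $X,Y$ and put $c=\operatorname{fd}_RS$, which is finite.

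The crux is that $S$ kills higher Tor against MCM modules: $\Tor{i}{S}{Z}=0$ for all $i>0$ and every MCM $R$-module $Z$. I would prove this by induction on $\dimR$, and inside that on $c$. The case $c=0$ is trivial, so let $c\ge 1$ and pick $0\to K\to F\to S\to 0$ with $F$ flat and $\operatorname{fd}_RK=c-1$. By the induction on $c$ one has $\Tor{>0}{K}{Z}=0$, so the long exact Tor sequence gives $\Tor{i}{S}{Z}\is\Tor{i-1}{K}{Z}=0$ for $i\ge 2$ and exhibits $\Tor{1}{S}{Z}$ as a submodule of $\tp{K}{Z}$. For a non-maximal prime $\mathfrak{p}$, the ring $R_{\mathfrak p}$ is Cohen--Macaulay of smaller dimension and $Z_{\mathfrak p}$ is MCM over it, so $\Tor{1}{S}{Z}_{\mathfrak p}\is\Tor[R_{\mathfrak p}]{1}{S_{\mathfrak p}}{Z_{\mathfrak p}}=0$ by the induction on $\dimR$; thus $\Tor{1}{S}{Z}$ has finite length. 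On the other hand, tensoring a flat resolution of $K$ with $Z$ yields, since $\Tor{>0}{K}{Z}=0$, a flat resolution of $\tp{K}{Z}$ of length $c-1$, so $\dpt{(\tp{K}{Z})}\ge\dimR-(c-1)\ge 1$ (note $c=\operatorname{fd}_RS\le\dimR$). Hence $\tp{K}{Z}$ has no nonzero finite-length submodule, and $\Tor{1}{S}{Z}=0$.

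Consequently $X_{S}:=\tp{S}{X}$ and $Y_{S}:=\tp{S}{Y}$ are finitely generated $S$-modules; moreover, if $G$ is a free resolution of $X$ over $R$ then $\tp{S}{G}$ is a free resolution of $X_{S}$ over $S$, so tensor evaluation and adjunction become elementary and give
\begin{equation*}
  \RHom[S]{X_{S}}{Y_{S}}\qis\Ltp{S}{\RHom{X}{Y}}.
\end{equation*}
Now assume $\Ext{n}{X}{Y}=0$ for $n\gg 0$ and let $N$ be the largest index with $\Ext{N}{X}{Y}\ne 0$ (if there is none, $0$ is an Auslander bound for $X$). There is a spectral sequence $\Tor{p}{S}{\Ext{q}{X}{Y}}\Rightarrow\Ext[S]{q-p}{X_{S}}{Y_{S}}$, and since $\Ext{q}{X}{Y}=0$ for $q>N$ it gives $\Ext[S]{n}{X_{S}}{Y_{S}}=0$ for $n>N$; in particular $\Ext[S]{n}{X_{S}}{Y_{S}}=0$ for $n\gg 0$, so the hypothesis on $S$ supplies an Auslander bound $\ab{X_{S}}$ for $X_{S}$ over $S$---and, for \uac, a uniform bound $b$---with $\Ext[S]{n}{X_{S}}{Y_{S}}=0$ for $n>\ab{X_{S}}$ (resp.\ $n>b$). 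Finally, replacing $\RHom{X}{Y}$ by a complex concentrated in cohomological degrees $\le N$ and tensoring with a flat resolution of $S$, a computation in the top degree shows
\begin{equation*}
  \Ext[S]{N}{X_{S}}{Y_{S}}\is\HHp[N]{\Ltp{S}{\RHom{X}{Y}}}\is\tp{S}{\Ext{N}{X}{Y}},
\end{equation*}
which is nonzero since $\Ext{N}{X}{Y}$ is finitely generated and nonzero while $S/\m S\ne 0$ (so that $\tp{S}{\Ext{N}{X}{Y}}$ stays nonzero after $-\otimes_{R}k$). Therefore $N\le\ab{X_{S}}$ (resp.\ $N\le b$), so $\Ext{n}{X}{Y}=0$ for $n>\ab{X_{S}}$ (resp.\ $n>b$). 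Hence $\ab{X_{S}}$ is an Auslander bound for $X$ over $R$, uniformly $b$ when $S$ satisfies \uac, and \obsref{Syz} finishes the proof.

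I expect the Tor-vanishing of the second paragraph to be the main obstacle. Without it, one must compare $\Ext$ over $S$ of the homologically bounded complexes $\Ltp{S}{X}$ and $\Ltp{S}{Y}$, and pushing an Auslander bound through the finite filtration of such a complex by its homology modules is obstructed by cancellation in the associated spectral sequence. Maximal Cohen--Macaulayness of $X$ and $Y$ is precisely what collapses those complexes to modules and lets the base-change argument run.
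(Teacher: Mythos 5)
Your proposal follows essentially the same route as the paper's proof: reduce to MCM modules via \obsref{Syz}, establish that MCM modules are Tor-independent of $S$, identify $\RHom[S]{\tp{X}{S}}{\tp{Y}{S}}$ with $\Ltp{\RHom{X}{Y}}{S}$ by tensor evaluation and adjunction, and use $S/\m S\ne 0$ to see that the top degree of non-vanishing is preserved under base change. The difference is only that the paper outsources the two key ingredients---it cites \thmcite[(3.4)]{CFF-02} for the Tor vanishing and \lemcite[2.1.(2)]{HBF77b} for the Nakayama step---whereas you prove them by hand; your spectral-sequence and top-degree computation is a correct unwinding of the latter. One assertion in your self-contained Tor-vanishing argument is wrong as stated: tensoring a flat resolution $F_\bullet$ of $K$ with $Z$ does \emph{not} produce a flat resolution of $\tp{K}{Z}$, since the modules $\tp{F_i}{Z}$ are not flat. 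What is true, and all your depth count needs, is that each $\tp{F_i}{Z}$ has depth at least $\dpt{Z}=\dimR$ (compute Koszul homology and pull the flat module $F_i$ outside it), so the depth lemma applied to the exact complex $0\to\tp{F_{c-1}}{Z}\to\cdots\to\tp{F_0}{Z}\to\tp{K}{Z}\to 0$ still yields $\dpt{(\tp{K}{Z})}\ge\dimR-(c-1)\ge 1$. Relatedly, since $S$ need not be module-finite over $R$, the module $\Tor{1}{S}{Z}$ need not have finite length---the localization argument only shows its support lies in $\{\m\}$---but a nonzero module with that support has nonzero socle, which still contradicts $\dpt{(\tp{K}{Z})}\ge 1$. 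With these repairs the argument is complete and matches the paper's.
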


\begin{proof}
  By \obsref{Syz} it suffices to consider cohomology of MCM
  $R$-modules. For such modules $X$ and $Y$ there are isomorphisms in
  the derived category over $R$,
  \begin{align*}
    \Ltp{\RHom{X}{Y}}{S} 
    & \qis \RHom[S]{\Ltp{X}{S}}{\Ltp{Y}{S}} \\
    & \qis \RHom[S]{\tp{X}{S}}{\tp{Y}{S}}.
  \end{align*}
  The first isomorphism is tensor evaluation, see Avramov and
  Foxby~\lemcite[4.4.(F)]{LLAHBF91}, and adjointness of Hom and
  tensor. The second isomorphism follows as one has $\Tor{n}{X}{S} =0=
  \Tor{n}{Y}{S}$ for $n\ge 1$; see \thmcite[(3.4)]{CFF-02}.  These
  isomorphisms and the assumption $S/\m S \ne 0$ yield:
  \begin{align*}
    \sup\setof{n}{\Ext{n}{X}{Y} \ne 0} =
    \sup\setof{n}{\Ext[S]{n}{\tp{X}{S}}{\tp{Y}{S}}\ne 0};
  \end{align*}
  see Foxby~\lemcite[2.1.(2)]{HBF77b}. Thus, if $b$ is an Auslander
  bound for the $S$-module $\tp{X}{S}$, then $b$ is also an Auslander
  bound for the $R$-module $X$.
\end{proof}

\begin{thm}
  \label{thm:completion}
  Let $\Rm$ be Cohen--Macaulay. Any one of the local rings
  \begin{equation*}
    R, \ \, \Rhat, \ \, \pows[R]{X}, \ \,\text{and} \ \, \poly[R]{X}_{(\m,X)}
  \end{equation*}
  satisfies \acuac\ if and only if they all satisfy \acuac.
\end{thm}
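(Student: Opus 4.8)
The plan is to reduce the four-ring assertion to two atomic transfer statements, each valid for an arbitrary Cohen--Macaulay local ring $A$: that $A$ satisfies \acuac\ if and only if the power series ring $A[[X]]$ does, and that $A$ satisfies \acuac\ if and only if its completion $\widehat{A}$ does. Granting these, the theorem follows formally, since $\widehat{R[[X]]} \cong \widehat{R[X]_{(\m,X)}} \cong \widehat{R}[[X]]$: the second statement ties $R$ to $\widehat{R}$, the first ties $R$ to $R[[X]]$, and the second --- applied to $R[[X]]$ and to $R[X]_{(\m,X)}$ --- ties both of those rings to $\widehat{R}[[X]]$.

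For the power series statement: if $A[[X]]$ satisfies \acuac, then so does $A = A[[X]]/(X)$ by \lemref{regelt}, as $X$ is a regular element of $A[[X]]$. For the converse, regard $A$ as an $A[[X]]$-algebra via $A = A[[X]]/(X)$. The Koszul complex on $X$ is a free resolution of $A$ over $A[[X]]$, so $A$ has finite flat dimension over $A[[X]]$, and $A\otimes_{A[[X]]}k = k \ne 0$. As $A[[X]]$ is Cohen--Macaulay and has a dualizing module whenever $A$ does, \lemref{descent} applies with $A[[X]]$ as the base ring and $A$ as the algebra $S$, and yields that $A[[X]]$ satisfies \acuac. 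Iterating, $A$ satisfies \acuac\ if and only if $A[[X_1,\dots,X_e]]$ does.

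For the completion statement: $A \to \widehat{A}$ is faithfully flat, and flat base change gives $\Ext{n}{M}{N}\otimes_A\widehat{A} \cong \Ext[\widehat{A}]{n}{\widehat{M}}{\widehat{N}}$ for finitely generated $A$-modules $M,N$; hence $\Ext{n}{M}{N}=0$ exactly when $\Ext[\widehat{A}]{n}{\widehat{M}}{\widehat{N}}=0$, and an Auslander bound for $\widehat{M}$ over $\widehat{A}$ serves as one for $M$ over $A$ --- so $\widehat{A}$ satisfying \acuac\ forces $A$ to. For the reverse, fix a minimal generating set $\m = (a_1,\dots,a_e)$ with $e = \edim{A}$; there is then a standard presentation $\widehat{A} \cong A[[X_1,\dots,X_e]]/(X_1-a_1,\dots,X_e-a_e)$. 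Since $A[[X_1,\dots,X_e]]$ is Cohen--Macaulay and the quotient $\widehat{A}$ has dimension $e$ smaller, the elements $X_i-a_i$ form a regular sequence. So by the power series statement $A$ satisfying \acuac\ forces $A[[X_1,\dots,X_e]]$ to, and then $e$ successive uses of \lemref{regelt} force $\widehat{A}$ to.

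The routine points are that the algebras in play really do have finite flat dimension over their bases with nonzero closed fibre, so that \lemref{descent} applies, and the standard facts that Cohen--Macaulayness, the existence of a dualizing module, and the displayed presentation of $\widehat{A}$ all behave well under adjoining power series variables. The genuinely delicate point --- and what I expect to be the main obstacle --- is that every ``upward'' implication above rests on \lemref{descent}, whose hypothesis requires the \emph{base} ring to possess a dualizing module; $A[[X]]$ and $A[[X_1,\dots,X_e]]$ do so exactly when $A$ does. Since complete rings always have a dualizing module, one can run almost the whole argument on the $\widehat{R}$-side --- invoking \lemref{descent} only over complete rings, alongside \lemref{regelt} and the faithful-flatness argument --- so the one implication that truly needs $R$ itself to carry a dualizing module is ``$R$ satisfies \acuac\ $\Rightarrow$ $\widehat{R}$ satisfies \acuac''. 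Whenever a dualizing module is available this is precisely the regular-sequence argument above; for a general Cohen--Macaulay $R$ I would close it instead via ascent of \acuac\ along the c.i.\ local homomorphism $R \into \widehat{R}$, which is flat with regular (indeed trivial) closed fibre $k$ --- the kind of transfer result taken up in \secref{homos}.
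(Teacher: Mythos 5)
Your overall architecture---descend along $\pows[R]{X} \onto R$ and $\poly[R]{X}_{(\m,X)} \onto R$ via \lemref{regelt}, ascend via \lemref{descent} applied to surjections whose kernel is generated by a regular sequence, and tie the four rings together through $\widehat{\poly[R]{X}_{(\m,X)}} \is \pows[\Rhat]{X} \is \widehat{\pows[R]{X}}$---is essentially the paper's, and you have correctly isolated the one genuinely problematic implication: that $R$ satisfying \acuac\ forces $\Rhat$ to, when $R$ need not have a dualizing module. But your proposed way of closing that implication is circular. The ascent theorem for c.i.\ local homomorphisms in \secref{homos} is proved \emph{from} \thmref{completion}: its very first step is ``assume $R$ is Cohen--Macaulay and satisfies \acuac, then $\Rhat$ has the same properties; see \thmref{completion}.'' So you cannot appeal to it here, and there is no independent ascent mechanism available in the paper: \lemref{descent} only ever transfers the property \emph{down} to the base ring, and its base is always required to carry a dualizing module.

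The missing idea is to route this implication through a common artinian quotient rather than through a flat extension. Let $\x=x_1,\dots,x_d$ be a maximal $R$-regular sequence; its image in $\Rhat$ is a maximal $\Rhat$-regular sequence, and $R/(\x)$, being artinian, is isomorphic to $\Rhat/(\x\Rhat)$. By \lemref{regelt} (iterated) this quotient satisfies \acuac. Now apply \lemref{descent} with base ring $\Rhat$---which, being complete, automatically has a dualizing module---and $S=\Rhat/(\x\Rhat)$, an $\Rhat$-algebra of finite flat dimension (Koszul complex) with nonzero closed fiber. This gives that $\Rhat$ satisfies \acuac\ with no hypothesis on $R$ beyond Cohen--Macaulayness. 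With this substitution the rest of your argument goes through; in particular your handling of $\pows[R]{X}$ and $\poly[R]{X}_{(\m,X)}$---invoking \lemref{descent} only over the complete ring $\pows[\Rhat]{X}$ and transferring back via the completion equivalence---is exactly what the paper does.
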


\begin{proof}
  The rings $R$, $\Rhat$, $\pows[R]{X}$, and $\poly[R]{X}_{(\m,X)}$
  are all local and Cohen--Macaulay.

  We first argue that $R$ and $\Rhat$ satisfy \acuac\
  simultaneously. If $\Rhat$ satisfies \acuac, then so does $R$ by
  \prpcite[5.5]{LWCHHlc}, as $\Rhat$ is a faithfully flat $R$-algebra.
  For the converse, assume that $R$ satisfies \acuac, and let
  $\x=x_1,\dots,x_d$ be a maximal $R$-regular sequence; its image in
  $\Rhat$ is a maximal $\Rhat$-regular sequence. There is an
  isomorphism of rings $R/(\x) \is \Rhat/(\x)$, and this ring is
  \acuac\ by \lemref{regelt}.  As $\Rhat$ has a dualizing module, it
  satisfies \acuac\ by \lemref{descent}.

  If $\pows[R]{X}$ or $\poly[R]{X}_{(\m,X)}$ satisfies \acuac, then
  so does $R$ by \lemref{regelt}. For the converse, assume that $R$
  and, therefore, $\Rhat$ satisfies \acuac. The local ring
  $\pows[\Rhat]{X}$, which is Cohen--Macaulay and has a dualizing
  module, satisfies \acuac\ by \lemref{descent} applied to the
  surjection $\pows[\Rhat]{X} \onto \Rhat$. The isomorphisms
  \begin{equation*}
    \widehat{\poly[R]{X}_{(\m,X)}} \is \pows[\Rhat]{X} \is \widehat{\pows[R]{X}}
  \end{equation*}
  and the assertion established in the first part of the proof now
  show that $\pows[R]{X}$ and $\poly[R]{X}_{(\m,X)}$ satisfy \acuac.
\end{proof}

\begin{cor}
  \label{cor:cmreg}
  Let $R$ be Cohen--Macaulay and $\x=x_1,\dots,x_n$ be an $R$-regular
  sequence. The ring $R/(\x)$ satisfies \acuac\ if and only if $R$
  satisfies \acuac.
\end{cor}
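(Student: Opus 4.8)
The plan is to assemble the three preceding results \lemref{regelt}, \lemref{descent}, and \thmref{completion}; no new idea is needed. The forward implication is the easy one. Realize $R/(\x)$ by successively passing from $R/(x_1,\dots,x_{i-1})$ to $R/(x_1,\dots,x_i)$; by definition of an $R$-regular sequence, $x_i$ is a regular element of $R/(x_1,\dots,x_{i-1})$, and each of these rings again satisfies \acuac\ by the previous step. Applying \lemref{regelt} $n$ times therefore shows that $R/(\x)$ satisfies \acuac\ whenever $R$ does; in fact, if $b$ is an Auslander bound over $R$ (a uniform one, in the \uac\ case), then $\max\{0,b-n\}$ is one over $R/(\x)$. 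No Cohen--Macaulay hypothesis is used here.

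For the converse, assume $R/(\x)$ satisfies \acuac. As $R/(\x)$ is Cohen--Macaulay and local, \thmref{completion} applied to it shows that its $\m$-adic completion satisfies \acuac\ as well. Since completion commutes with finite quotients there is a ring isomorphism $\widehat{R/(\x)} \is \Rhat/(\x)$, so $\Rhat/(\x)$ satisfies \acuac.

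Now I would invoke \lemref{descent} for the surjection of rings $\Rhat \onto \Rhat/(\x)$, viewing $S = \Rhat/(\x)$ as an $\Rhat$-algebra. The ring $\Rhat$ is Cohen--Macaulay and, being complete, has a dualizing module. The algebra $S$ has finite flat dimension over $\Rhat$, because the image of $\x$ in $\Rhat$ is an $\Rhat$-regular sequence and hence the Koszul complex on it is a finite free resolution of $S$ over $\Rhat$. Finally, since $(\x)$ is contained in the maximal ideal $\m\Rhat$ of $\Rhat$, one has $S/(\m\Rhat)S \is \Rhat/\m\Rhat = k \ne 0$, so the hypothesis on the closed fiber in \lemref{descent} is met. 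That lemma then yields that $\Rhat$ satisfies \acuac, and a final application of \thmref{completion}---this time to the Cohen--Macaulay ring $R$ itself---shows that $R$ satisfies \acuac.

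The argument is pure bookkeeping, so there is no substantial obstacle; the only points that warrant a moment's care are the identification $\widehat{R/(\x)} \is \Rhat/(\x)$ and the verification that the $\Rhat$-algebra $\Rhat/(\x)$ satisfies the hypotheses of \lemref{descent}---finite flat dimension and nonvanishing of $S/\m S$---which is what permits descent of the \acuac\ property from the quotient back to $\Rhat$.
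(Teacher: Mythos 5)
Your proposal is correct and follows the paper's own argument essentially verbatim: the ``if'' direction by iterating \lemref{regelt}, and the converse by completing $R/(\x)$ via \thmref{completion}, identifying $\widehat{R/(\x)} \is \Rhat/(\x)$, descending to $\Rhat$ via \lemref{descent} applied to the surjection $\Rhat \onto \Rhat/(\x)$, and finishing with another application of \thmref{completion}. Your explicit verification of the hypotheses of \lemref{descent} (finite flat dimension via the Koszul resolution, nonvanishing of $S/\m S$, existence of a dualizing module over the complete ring $\Rhat$) is exactly what the paper leaves implicit.
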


\begin{proof}
  The ``if'' part is immediate from \lemref{regelt}. For the converse,
  assume that $R/(\x)$ satisfies \acuac. It is a Cohen--Macaulay
  ring, so by \thmref{completion} the completion
  \mbox{$\smash{\widehat{R/(\x)} \is \Rhat/(\x)}$} satisfies
  \acuac. As the image of $\x$ in $\Rhat$ is a regular sequence, the
  ring $\Rhat$ satisfies \acuac\ by \lemref{descent}, and then
  another application of \thmref[]{completion} shows that $R$
  satisfies \acuac.
\end{proof}

A Cohen--Macaulay ring reduces modulo a regular sequence to an
artinian ring, and by \corref{cmreg} the two rings satisfy \acuac\
simultaneously. In a clear analogy, the
conditions \ac\ and \uac\ can be verified on modules of finite length:

\begin{prp}
  Let $R$ be Cohen--Macaulay. It satisfies \ac\ if and only if it
  satisfies the condition:
  \begin{condition}
    For every $R$-module $K$ of finite length there exists an integer
    $\ab{K} \ge 0$ such that for every $R$-module $L$ of finite length
    one has: $\Ext{n}{K}{L}=0$ for $n\gg 0$ implies $\Ext{n}{K}{L}=0$
    for $n> \ab{K}$.
  \end{condition}
  Furthermore, $R$ satisfies \uac\ if and only if it satisfies the
  condition:
  \begin{condition}
    There exists an integer $b \ge 0$ such that for all $R$-modules
    $K$ and $L$ of finite length one has: $\Ext{n}{K}{L}=0$ for $n\gg
    0$ implies $\Ext{n}{K}{L}=0$ for $n> b$.
  \end{condition}
\end{prp}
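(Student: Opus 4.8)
The ``only if'' implications are trivial, since every module of finite length is finitely generated. For the converses, the plan is to pass to an artinian quotient of $R$, verify \ac\ (resp.\ \uac) there, and transfer the conclusion back with \corref{cmreg}.

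First I would fix a maximal $R$-regular sequence $\x = x_1,\dots,x_d$ and set $R_0 = R$ and $R_{i+1} = R_i/(x_{i+1})$, so that $A := R_d = R/(\x)$ is artinian and each $x_{i+1}$ is a non-zerodivisor on $R_i$. As $A$ is artinian, every finitely generated $A$-module has finite length, and therefore also finite length as an $R$-module. The key input is the short exact sequence
\[
  0 \to \Ext[R_{i+1}]{n-1}{M}{N} \to \Ext[R_i]{n}{M}{N} \to \Ext[R_{i+1}]{n}{M}{N} \to 0
\]
produced in the proof of \lemref{regelt} (applied with $R_i$ in place of $R$ and $x_{i+1}$ in place of $x$), which is available for all $n$ and all finitely generated $A$-modules $M$ and $N$---these being modules over every $R_i$. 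By induction along the tower $R = R_0 \onto \cdots \onto R_d = A$ I would then deduce two things: since $\Ext[R_{i+1}]{n}{M}{N}$ is a quotient of $\Ext[R_i]{n}{M}{N}$, vanishing of $\Ext{n}{M}{N}$ for all $n > b$ forces vanishing of $\Ext[A]{n}{M}{N}$ for all $n > b$; and since $\Ext[R_i]{n}{M}{N}$ is an extension of two consecutive cohomology modules over $R_{i+1}$, vanishing of $\Ext[A]{n}{M}{N}$ for $n \gg 0$ forces vanishing of $\Ext{n}{M}{N}$ for $n \gg 0$.

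Granting this, I would argue as follows. Suppose $R$ satisfies the finite-length version of \ac. Let $M$ be a finitely generated $A$-module; viewed over $R$ it has finite length, so the hypothesis yields a bound $\ab{M} \ge 0$ valid against all finite-length $R$-modules. If $N$ is a finitely generated $A$-module with $\Ext[A]{n}{M}{N} = 0$ for $n \gg 0$, then $\Ext{n}{M}{N} = 0$ for $n \gg 0$, hence $\Ext{n}{M}{N} = 0$ for $n > \ab{M}$ because $N$ has finite length over $R$, and therefore $\Ext[A]{n}{M}{N} = 0$ for $n > \ab{M}$. Thus $\ab{M}$ is an Auslander bound for $M$ over $A$, so $A$ satisfies \ac, and \corref{cmreg} gives that $R$ satisfies \ac. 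The \uac\ statement follows the same path with one bound $b$ carried throughout: a uniform bound for finite-length $R$-modules is a uniform bound for finitely generated $A$-modules, whence $A$---and so $R$, again by \corref{cmreg}---satisfies \uac.

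I do not anticipate a real obstacle here: the argument is bookkeeping around \lemref{regelt} and \corref{cmreg}. The one point that genuinely needs attention is the direction in which information travels along the tower $R \onto \cdots \onto A$: vanishing of cohomology ``for $n \gg 0$'' transfers in both directions, while a concrete Auslander bound transfers downward from $R$ to $A$, and the short exact sequences above are exactly what makes both of these work. (When $d = 0$ the statement is trivial, as then finite length and finite generation coincide over $R$ itself.)
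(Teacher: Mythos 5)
Your argument is correct, but it takes a genuinely different route from the paper's. You pass to the artinian quotient $A = R/(\x)$, compare $\Ext{n}{M}{N}$ with $\Ext[A]{n}{M}{N}$ for finitely generated $A$-modules via the iterated change-of-rings exact sequences from the proof of \lemref{regelt} (noting that such modules have finite length over $R$), conclude that $A$ satisfies \acuac, and then invoke \corref{cmreg} to return to $R$; since \corref{cmreg} is established before this proposition, there is no circularity, and the two directions of transfer you isolate (a concrete bound passes down the tower because each $\Ext[R_{i+1}]{n}{M}{N}$ is a quotient of $\Ext[R_i]{n}{M}{N}$, while vanishing for $n \gg 0$ passes in both directions) are exactly what is needed. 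The paper instead stays over $R$ throughout: it first reduces to the complete case so that a dualizing module is available, uses MCM approximations (\obsref{Syz}) to reduce \acuac\ to MCM test modules, and then a derived-category computation with the Koszul complex on $\x$ shows that for MCM modules $X,Y$ one has $\sup\setof{n}{\Ext{n}{X}{Y}\ne 0} = \sup\setof{n}{\Ext{n}{X/\x X}{Y/\x Y}\ne 0} - d$, so the finite-length hypothesis applied to $X/\x X$ and $Y/\x Y$ over $R$ itself finishes the proof. Your approach is more elementary at this stage of the paper---it reuses \lemref{regelt} and \corref{cmreg} and avoids the completion step, the dualizing module, and the MCM approximation machinery---at the cost of routing all the heavy lifting through \corref{cmreg}; the paper's computation has the side benefit of an exact degree-$d$ shift relating nonvanishing for MCM modules to nonvanishing for their finite-length reductions.
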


\begin{proof}
  Assume that $R$ satisfies \acuac\ for modules of finite
  length. For $\Rhat$-modules $K$ and $L$ of finite length and
  integers $n \ge 0$ there are isomorphisms
  \begin{equation*}
    \Ext[\Rhat]{n}{K}{L} \is \Ext[\Rhat]{n}{\tp{\Rhat}{K}}{L} \is \Ext{n}{K}{L}.
  \end{equation*}
  As $R$-modules, $K$ and $L$ also have finite length; hence it
  follows that $\Rhat$ satisfies \acuac\ for modules of finite
  length. In view of \thmref{completion} we can now assume that $R$ is
  complete and, in particular, that it has a dualizing module.

  By \obsref{Syz} it is sufficient to prove that $R$ satisfies
  \acuac\ for MCM modules. Let $X$ and $Y$ be MCM $R$-modules, and
  let $\x=x_1,\dots,x_d$ be a maximal $R$-regular sequence. As $\x$ is
  regular on $X$ and $Y$, there are isomorphisms
  \begin{equation}
    \label{eq:aa}
    \tag{1}
    X/\x X \qis \Ltp{X}{R/(\x)} \qand Y/\x Y \qis \Ltp{Y}{R/(\x)}
  \end{equation}
  in the derived category over $R$. Nakayama's lemma,
  see~\lemcite[2.1.(2)]{HBF77b}, yields the first equality below,
  while the second follows from \lemcite[4.4.(F)]{LLAHBF91} and
  \eqref{aa}.
  \begin{equation}
    \label{eq:ab}
    \tag{2}
    \begin{split}
      \sup\setof{n}{\HH[n]{&\RHom{X}{Y} \ne 0}}\\ 
      &= \sup\setof{n}{\HHp[n]{\Ltp{\RHom{X}{Y}}{R/(\x)}}\ne 0}\\
      &= \sup\setof{n}{\HH[n]{\RHom{X}{Y/\x Y} \ne 0}}.
    \end{split}
  \end{equation}
  From \eqref{aa} and adjointness of Hom and tensor one gets
  \begin{equation}
    \label{eq:ac}
    \tag{3}
    \RHom{X/\x X}{Y/\x Y} \qis \RHom{X}{\RHom{R/(\x)}{Y/\x Y}}.
  \end{equation}
  The Koszul complex on $\x$ is a free resolution of $R/(\x)$, so
  there is an isomorphism
  \begin{equation}
    \label{eq:ad}
    \tag{4}
    \Shift[d]{\RHom{R/(\x)}{-}} \qis \Ltp{-}{R/(\x)}.
  \end{equation}
  The next equalities follow from \eqref{ab} and Nakayama's lemma,
  from \lemcite[4.4.(F)]{LLAHBF91} and \eqref{ad}, and from
  \eqref{ac}, respectively.
  \begin{align*}
    \sup\setof{n}{\HH[n]{&\RHom{X}{Y}}\ne 0}\\
    &= \sup\setof{n}{\HHp[n]{\Ltp{\RHom{X}{Y/\x Y}}{R/(\x)}}\ne 0}\\
    &= \sup\setof{n}{\HH[n]{\RHom{X}{\Shift[d]{\RHom{R/(\x)}{Y/\x
              Y}}}}\ne 0 }\\
    &= \sup\setof{n}{\HH[n]{\RHom{X/\x X}{Y/\x Y}}\ne 0 } -d.
  \end{align*}
  That is, one has
  \begin{align*}
    \sup\setof{n}{\Ext{n}{X}{Y}\ne 0}= \sup\setof{n}{\Ext{n}{X/\x
        X}{Y/\x Y}\ne 0 } -d.
  \end{align*}
  As the $R$-modules $X/\x X$ and $Y/\x Y$ have finite length, a
  straightforward argument finishes the proof, cf.~proof of
  \lemref{descent}.
\end{proof}

\section{An example}
\label{sec:example}

One can construct new examples of Cohen--Macaulay local rings that
satisfy \acuac\ from existing ones through the process of adjoining
power series variables and killing regular elements. To get examples
not covered in the literature, see the list \cite[A.1]{LWCHHlc}, the
new ring $R$ should not be Golod or complete intersection (c.i.); and
if $R$ is Gorenstein, then its codimension, $\codim{R}$, should be at
least $5$, and its multiplicity, $\mult{R}$, should be at least
$\codim{R}+3$. Furthermore, $R$ should not be of finite CM type,
cf.~\thmref{cm}.

\enlargethispage{\baselineskip}
\begin{prp}
  \label{prp:heitmann}
  Let $Q$ be a Cohen--Macaulay local ring and assume that it satisfies
  \acuac. For integers $d,n \ge 1$ and $s_i \ge 2$, the local ring
  \begin{equation*}
    R = \pows[Q]{X_1,\dots,X_d,Y_1,\dots,Y_n}/(Y_1^{s_1},\dots,Y_n^{s_n})
  \end{equation*}
  is Cohen--Macaulay with $\codim{R} = \codim{Q} + n$, and it
  satisfies \acuac. Moreover, the following hold:
  \begin{prt}
  \item $R$ is not of finite CM type.
  \item If $Q$ is not c.i., then $R$ is not c.i.\ and $R$ is not
    Golod.
  \item If $Q$ has infinite residue field and one has $s_i \ge 4$ for
    some $i\in\set{1,\ldots,n}$, then the inequality $\mult{R} \ge
    \codim{R} + 3$ holds.
  \end{prt}
\end{prp}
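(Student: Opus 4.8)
The plan is to reduce everything to a single application of the preservation results from Sections~\ref{sec:stab} and build the ring $R$ from $Q$ in two stages: first adjoin the power series variables, then kill the regular sequence $Y_1^{s_1},\dots,Y_n^{s_n}$. Concretely, set $P = \pows[Q]{X_1,\dots,X_d,Y_1,\dots,Y_n}$. Iterating \thmref{completion} (which allows adjoining one power series variable at a time), $P$ is Cohen--Macaulay and satisfies \acuac, since $Q$ does. The elements $Y_1^{s_1},\dots,Y_n^{s_n}$ form a $P$-regular sequence (they are part of a regular system of parameters up to the $s_i$-th powers; more precisely $Y_1,\dots,Y_n$ is a regular sequence on the power series ring, and raising to positive powers preserves this). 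Hence by \corref{cmreg}, $R = P/(Y_1^{s_1},\dots,Y_n^{s_n})$ is Cohen--Macaulay and satisfies \acuac. The codimension count is bookkeeping: adjoining a power series variable raises both $\edim{}$ and $\dim{}$ by $1$, hence leaves $\codim{}$ unchanged, so $\codim{P} = \codim{Q}$; killing a regular element lowers $\dim{}$ by $1$ and leaves $\edim{}$ unchanged (since $Y_i^{s_i} \in \m_P^2$ as $s_i \ge 2$), so $\codim{R} = \codim{P} + n = \codim{Q} + n$. This yields the first, unnumbered assertion of the proposition.

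For part~(a), the idea is that $R$ retracts onto (a quotient of) $\pows[k]{Y}/(Y^{s})$ with $s \ge 2$, which has infinite CM type whenever $s \ge 3$; when every $s_i = 2$ one instead exhibits infinitely many indecomposable MCM modules directly over $R$. The cleanest route is to observe that $R$ has a hypersurface section—e.g.\ modding out the regular sequence $X_1,\dots,X_d$ and then $Y_2^{s_2},\dots,Y_n^{s_n}$ leaves a ring with an embedded $\pows[k']{Y_1}/(Y_1^{s_1})$—and invoke that a Cohen--Macaulay local ring with a non-regular, non-hypersurface-of-dimension-$0$... more simply: $R$ modulo a suitable parameter system is a non-Gorenstein or non-reduced artinian ring that is not of finite CM type, and finite CM type descends modulo regular sequences (Herzog's theorem / \cite{yos}). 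I expect part~(a) to go through by citing that finite CM type for a complete Cohen--Macaulay ring forces, after reduction to dimension~$0$, the ring to be a quotient by a simple curve or surface singularity; the rings $\pows{Y}/(Y^s)$ for $s\ge 3$ are not among these, and for $s=2$ one uses that $R$ has a one-dimensional hypersurface section of infinite CM type unless the whole thing is a simple singularity, which the presence of $n\ge 1$ extra variables and $d\ge 1$ rules out.

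For part~(b), suppose $Q$ is not c.i.; then $P$ is not c.i.\ (adjoining power series variables preserves being c.i.\ in both directions), and since $Y_1^{s_1},\dots,Y_n^{s_n}$ is $P$-regular, $R$ is c.i.\ iff $P$ is c.i.; hence $R$ is not c.i. For the Golod assertion, the key point is that $R$ has a nontrivial Golod-type obstruction coming from $Q$: Golod rings are closed under neither adjoining variables nor killing regular elements in a way that would let a non-Golod $Q$ produce a Golod $R$—indeed, $R/(Y_1^{s_1},\dots,Y_n^{s_n})$-type quotients of a Golod ring are Golod and... more carefully: if $R$ were Golod, then so would be $R/(X_1,\dots,X_d,Y_1,\dots,Y_n)R$ after adjusting, but this quotient contains $Q$ as a retract (the composite $Q \to P \to R \to R/(\text{variables}) \cong Q/\mathfrak{a}$ for some ideal), and a retract of a Golod ring being Golod would force $Q$ Golod—contradiction via Lescot's or Levin's results on Golod retracts. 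I expect part~(b)'s Golod half to be the main obstacle: one must locate in the literature (Avramov's \cite{ifr}, or Lescot) the precise statement that a non-Golod ring cannot appear as a Golod-inducing retract after this construction; if no such black box is available, one argues via Poincaré series, showing the Koszul homology Massey products inherited from $Q$ survive in $R$.

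For part~(c), assume $Q$ has infinite residue field and $s_i \ge 4$ for some $i$. The multiplicity is multiplicative along the construction in a controlled way: $\mult{P} = \mult{Q}$ (power series extension preserves multiplicity), and $\mult{R} = \mult{P} \cdot \prod_{j} s_j / (\text{correction})$—actually, killing $Y_j^{s_j}$ from a power series variable multiplies multiplicity by $s_j$, so $\mult{R} = \mult{Q}\cdot s_1\cdots s_n$. Combined with $\codim{R} = \codim{Q} + n$ and the universal bound $\mult{Q} \ge \codim{Q}+1$, one gets $\mult{R} \ge (\codim{Q}+1)\, s_1\cdots s_n$. With all $s_j \ge 2$ and at least one $s_i \ge 4$, a direct estimate $(\codim{Q}+1)\,2^{n-1}\cdot 4 \ge (\codim{Q}+1) + n + 3 = \codim{R}+3$ holds for all $d,n\ge 1$ and $\codim{Q}\ge 0$; the infinite residue field is used only to pass to a minimal reduction so that the multiplicity formula $\mult{R}=\lgt{}(R/\q R)$ for a parameter ideal $\q$ is exact. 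This part is routine once the multiplicity-under-construction formula is pinned down, which I would verify by taking the parameter ideal generated by $X_1,\dots,X_d$ together with the images of a minimal reduction of $\m_Q$ and then $Y_1,\dots,Y_n$, and computing lengths.
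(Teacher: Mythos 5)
The skeleton of your argument for the unnumbered assertion --- adjoin the power series variables via \thmref{completion}, kill the $P$-regular sequence $Y_1^{s_1},\dots,Y_n^{s_n}$ via \corref{cmreg}, and do the codimension bookkeeping --- is exactly what the paper does, and your part~(c) is a correct variant: where the paper takes the minimal reduction $\mathfrak{b}=(\mathfrak{a},X_1,\dots,X_d)$ of $\m$ and bounds $\lgt[]{R/\mathfrak{b}}\ge\edim{(R/\mathfrak{b})}+3$ using only that $(\m/\mathfrak{b})^3\ne 0$, you compute $\mult{R}=\mult{Q}\,s_1\cdots s_n$ exactly; your displayed identity $(\codim{Q}+1)+n+3=\codim{R}+3$ is off by one, but the inequality actually needed, $(\codim{Q}+1)2^{n+1}\ge\codim{Q}+n+3$, does hold. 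The genuine problems are in parts (a) and (b).

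For part (a), your pivotal claim that $\pows[k]{Y}/(Y^{s})$ has infinite CM type for $s\ge 3$ is false: this is an artinian principal ideal ring, every finitely generated module is a direct sum of the cyclic modules $\pows[k]{Y}/(Y^{i})$ with $1\le i\le s$, so it has exactly $s$ indecomposable (automatically maximal Cohen--Macaulay) modules. The rest of your sketch --- descending finite CM type modulo the regular sequence $X_1,\dots,X_d$ and comparing with simple singularities --- is not justified and, as you half-acknowledge, never closes. The argument you should be making is much shorter: since $d\ge 1$, the ideal $\mathfrak{p}=(\q,Y_1,\dots,Y_n)$ is a \emph{non-maximal} prime of $R$, and the image of $Y_1$ in $R_{\mathfrak{p}}$ is a nonzero zero-divisor, so $R_{\mathfrak{p}}$ is not regular and $R$ is not an isolated singularity; by Huneke and Leuschke, a Cohen--Macaulay local ring of finite CM type is an isolated singularity.

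For part (b), the c.i.\ half is fine, but the Golod half has a logical gap that no literature search will repair. Your retract argument would at best prove the implication ``$R$ Golod $\Rightarrow$ $Q$ Golod,'' and that is no contradiction: the hypothesis is only that $Q$ is \emph{not c.i.}, and such a $Q$ may perfectly well be Golod (many of the intended inputs are). The paper's argument is different in kind: the $R$-module $R/(Y_1)$ has the periodic minimal free resolution $\cdots\to R\xra{Y_1^{s_1-1}}R\xra{Y_1}R\to 0$, hence infinite projective dimension and constant Betti numbers equal to $1$; since $Q$ is singular one has $\codim{R}=\codim{Q}+n\ge 2$, so $R$ is not a hypersurface, and over a Golod ring that is not a hypersurface the Betti numbers of every module of infinite projective dimension are eventually strictly increasing (Avramov, \emph{Infinite free resolutions}, thm.~5.3.3(5)). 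Hence $R$ is not Golod. You need an obstruction of this type --- a bounded resolution over $R$ itself --- rather than an attempt to push Golodness back to $Q$.
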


\begin{proof}
  Clearly, $R$ is Cohen--Macaulay with $\codim{R} = \codim{Q} + n$,
  and it is immediate from \thmref{completion} and \corref{cmreg} that
  $R$ satisfies \acuac.

  (a): Let $\q$ be the maximal ideal in $Q$. The image of $Y_1$ in
  $R_{(\q,Y_1,\dots,Y_n)}$ is a zero-divisor, so $R$ is not an
  isolated singularity; in particular, $R$ is not of finite CM type;
  see Huneke and Leuschke~\corcite[2]{CHnGJL02}.

  (b): Assume that $Q$ is not c.i. By \cite[(5.10)]{LLA99} the ring
  $R$ is not c.i. As $Q$ is singular, one has $\codim{R} =\codim{Q} + n
  \ge 2$. The $R$-module $R/(Y_1)$ has constant Betti numbers equal to
  $1$. It follows from~\thmcite[5.3.3.(5)]{ifr} that $R$ is not Golod.

  (c): Let $\mathfrak{a}$ be a minimal reduction of the maximal ideal $\q$ in
  $Q$. It is elementary to verify that $\mathfrak{b} =
  (\mathfrak{a},X_1,\ldots,X_d)$ is a reduction of $\m =
  (\q,X_1,\ldots,X_d,Y_1,\ldots,Y_n)$, the maximal ideal in
  $R$. Assume that $Q/\q$ is infinite. It follows that $\mathfrak{a}$ is
  minimally generated by $\dim[]{Q}$ elements and, therefore,
  $\mathfrak{b}$ is generated by $\dim[]{Q}+d=\dimR$ elements. Hence,
  $\mathfrak{b}$ is a minimal reduction of $\m$; see Swanson and
  Huneke~\prpcite[8.3.7 and cor.~8.3.5.(1)]{icirm}.  As $R$ is
  Cohen--Macaulay, \prpcite[11.2.2]{icirm} yields $\mult{R} =
  \lgt[]{R/\mathfrak{b}}$. If one has $s_i \ge 4$ for some $i$, then
  $(\m/\mathfrak{b})^3$ is non-zero, whence
  \begin{equation*}
    \lgt[]{R/\mathfrak{b}} \ge \edim{R/\mathfrak{b}} + 3 \ge
    \edim{R} - \dimR + 3 = \codim{R} + 3. \qedhere
  \end{equation*}
\end{proof}

We thank Louiza Fouli for clarifying for us the basics of multiplicities
that are used in the proof of part (c) above.

All assumptions on $Q$ in \prpref{heitmann} are satisfied by the ring
in \exaref{Q}; thus for all choices of $d,n \ge 1$ and $s_i \ge 2$ the
resulting ring $R$ in \prpref[]{heitmann} is not among the rings on
the list \cite[A.1]{LWCHHlc} nor is it of finite CM type.
  
\begin{exa}
  \label{exa:Q}
  Let $k$ be a field of characteristic $0$ and consider the local
  $k$-algebra
  \begin{equation*}
    Q = \poly{U,V,W}/(U^2-V^2,V^2 - W^2,UV,VW).
  \end{equation*}
  It has radical cube zero and $\edim{Q}=3$, so it satisfies \uac\ by
  \prpcite[1.1.(3)]{DAJLMS04}. Moreover, $Q$ is not Gorenstein; in
  particular, it is not c.i.
\end{exa}

Through a result of Heitmann \cite{RCH93}, the construction in
\prpref{heitmann} also provides examples of unique factorization
domains that satisfy \acuac.

\begin{obs}
  Let $Q$ be a complete Cohen--Macaulay local ring that satisfies
  \acuac. Assume that $Q$ is not c.i.\ and that no integer is a zero
  divisor in~$Q$; the ring in \exaref[]{Q} serves as an example.

  Let $R$ be as in \prpref{heitmann}. If $Q$ is not artinian or $d$ is
  at least $2$, then $R$ has depth at least $2$. By
  \thmcite[8]{RCH93}, there is a local unique factorization domain,
  $D$, with $\widehat{D} \is R$. It follows from \thmref{completion}
  that $D$ satisfies \acuac.  Of course, $R$ is also the completion
  of the ring
  $\poly[Q]{X_1,\dots,X_d,Y_1,\dots,Y_n}/(Y_1^{s_1},\dots,Y_n^{s_n})$
  localized at the irrelevant maximal ideal, but that ring is not a
  domain.
\end{obs}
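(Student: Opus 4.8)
The plan is to reduce the assertion to two results already available: Heitmann's characterization of the complete local rings that occur as completions of local unique factorization domains, \thmcite[8]{RCH93}, and \thmref{completion}, which moves \acuac\ between a Cohen--Macaulay local ring and its completion. So the work splits into two parts: (i) check that $R$ satisfies the hypotheses of Heitmann's theorem, and (ii) carry \acuac\ from $R=\widehat{D}$ down to the domain $D$ that Heitmann's theorem produces.

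For (i), recall from \prpref{heitmann} that $R$ is Cohen--Macaulay, and note that $R$ is complete, being a quotient of the complete local ring $\pows[Q]{X_1,\dots,X_d,Y_1,\dots,Y_n}$. Beyond completeness, Heitmann's criterion requires that $R$ have depth at least $2$ and that no integer be a zero divisor in $R$. Since $R$ is Cohen--Macaulay, its depth equals $\dimR=\dim[]{Q}+d$; a noetherian local ring of dimension $0$ is artinian, so the hypothesis that $Q$ is not artinian forces $\dim[]{Q}\ge 1$ and hence $\dimR\ge 2$, while the alternative hypothesis $d\ge 2$ gives $\dimR\ge 2$ at once. For the zero-divisor condition, observe that $R$ is free as a module over $\pows[Q]{X_1,\dots,X_d}$, with basis the monomials $Y_1^{a_1}\cdots Y_n^{a_n}$ for $0\le a_i<s_i$; since no integer is a zero divisor in $Q$, none is a zero divisor in $\pows[Q]{X_1,\dots,X_d}$, and consequently none is a zero divisor on the free module $R$.

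With (i) in hand, \thmcite[8]{RCH93} furnishes a local unique factorization domain $D$ with $\widehat{D}\is R$; and since $\widehat{D}$ is Cohen--Macaulay, so is $D$. Thus \thmref{completion}, applied to the Cohen--Macaulay local ring $D$, shows that $D$ satisfies \acuac\ because its completion $\widehat{D}=R$ does, by \prpref{heitmann}. Finally, it is worth recording why one must pass through Heitmann's theorem: $R$ is equally the completion of $\poly[Q]{X_1,\dots,X_d,Y_1,\dots,Y_n}/(Y_1^{s_1},\dots,Y_n^{s_n})$ localized at its irrelevant maximal ideal, but in that ring $Y_1^{s_1}=0$ while $Y_1\ne 0$, so it has zero divisors and is not a domain. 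I expect the only mildly delicate point to be the bookkeeping around Heitmann's hypotheses---chiefly propagating ``no integer is a zero divisor'' from $Q$ to $R$ through the free-module structure---since all the cohomological substance is already packaged in \thmref{completion} and \prpref{heitmann}, and no fresh Ext computation is needed.
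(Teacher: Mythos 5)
Your proposal is correct and follows the same route the paper intends: verify Heitmann's hypotheses (completeness, depth at least $2$, no integer a zero divisor) for $R$, invoke \thmcite[8]{RCH93} to get $D$ with $\widehat{D}\is R$, and transfer \acuac\ from $R$ to $D$ via \thmref{completion}. The only added content is your explicit check that no integer is a zero divisor in $R$ via the free-module structure over $\pows[Q]{X_1,\dots,X_d}$, which the paper leaves implicit and which is a correct and worthwhile detail.
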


\section{Local homomorphisms}
\label{sec:homos}

Let $\Rm$ and $\Sn$ be local rings, and let $\mapdef{\f}{R}{S}$ be a
\emph{local} homomorphism, that is, a ring homomorphism with $\f(\m)
\subseteq \n$. The \emph{semicompletion} of~$\f$~is its composite with
the embedding $S \into \Shat$; it is written
$\mapdef{\grave{\f}}{R}{\Shat}$. The semicompletion of $\f$ admits a
Cohen factorization; that is, there exist local homomorphisms
\begin{equation*}
  R \xra{\dot{\f}} R' \xra{\f'} \Shat,
\end{equation*}
such that $\grave{\f}=\f'\dot{\f}$, where $\dot{\f}$ is flat with
regular closed fiber $R/\m R'$, the ring $R'$ is complete, and $\f'$
is surjective; see Avramov, Foxby, and
Herzog~\thmcite[(1.1)]{AFH-94}. If $\f$ is of finite flat
dimension---that is, $S$ has finite flat dimension as an $R$-module
via $\f$---then the surjection $\f'$ is of finite projective
dimension; see~\cite[(3.3)]{AFH-94}.

A Gorenstein local ring satisfies \ac\ if and only if it satisfies
\uac; following \cite{CHnDAJ03} such a ring is called an \emph{AB
  ring}. The \ac\ and \uac\ properties descend along local
homomorphisms of finite flat dimension:

\begin{thm}
  Let $\mapdef{\f}{R}{S}$ be a local homomorphism of finite flat
  dimension.
  \begin{prt}
  \item If $S$ is Cohen--Macaulay and satisfies \acuac, then $R$ is
    Cohen--Macaulay and satisfies \acuac.
  \item If $S$ is an AB ring, then $R$ is an AB ring.
  \end{prt}
\end{thm}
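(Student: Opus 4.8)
The plan is to reduce everything to the already-established transfer results via a Cohen factorization. First I would pass to the semicompletion $\grave{\f}\colon R\to\Shat$ and choose a Cohen factorization $R\xra{\dot{\f}} R'\xra{\f'}\Shat$, where $\dot{\f}$ is flat with regular closed fiber, $R'$ is complete, and $\f'$ is surjective; since $\f$ has finite flat dimension, $\f'$ has finite projective dimension. For part (a), assuming $S$ is Cohen--Macaulay and satisfies \acuac, I would first note that $\Shat$ is Cohen--Macaulay and, by the faithfully flat descent of \acuac\ along $S\into\Shat$ (\prpcite[5.5]{LWCHHlc}), $\Shat$ also satisfies \acuac. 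Because $\f'$ is a surjection onto $\Shat$ with finite projective dimension, $R'$ is Cohen--Macaulay and $\Shat \is R'/(\pmb{y})$ for an $R'$-regular sequence $\pmb{y}$; \corref{cmreg} then gives that $R'$ satisfies \acuac. (Here I may need to invoke that a ring mapping onto a Cohen--Macaulay ring by a surjection of finite projective dimension is itself Cohen--Macaulay with the kernel generated by a regular sequence—this is standard, e.g.\ via the Auslander--Buchsbaum formula, since the surjection being of finite projective dimension forces $\pd_{R'}\Shat = \dpt R' - \dpt\Shat$.)

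The next step handles the flat base-change $\dot{\f}\colon R\to R'$. Since $\dot{\f}$ is flat local with regular closed fiber $R'/\m R'$, and $R'$ is Cohen--Macaulay satisfying \acuac, I would conclude that $R$ is Cohen--Macaulay (flatness plus Cohen--Macaulay fiber descends to the base) and that $R$ satisfies \acuac. For the latter I would apply \lemref{descent}: after first reducing $R$ modulo a maximal regular sequence so that it becomes artinian and hence has a dualizing module—invoking \corref{cmreg} and \thmref{completion} to pass between $R$, $\Rhat$, and these modifications as in the proof of \thmref{completion}—I can realize $R'$ (or a suitable completion thereof) as an $R$-algebra of finite flat dimension with nonzero closed fiber, so \lemref{descent} yields that $R$ inherits \acuac\ from $R'$. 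Actually the cleanest route mirrors the proof of \thmref{completion}: replace $R$ by $\Rhat$ (harmless by \prpcite[5.5]{LWCHHlc} and the argument of \thmref{completion}), note $\dot{\f}$ induces a flat local map $\Rhat\to \widehat{R'}$ with regular closed fiber, and $\widehat{R'}$ is complete Cohen--Macaulay with a dualizing module satisfying \acuac; then \lemref{descent} applies directly to the $\Rhat$-algebra $\widehat{R'}$, giving \acuac\ for $\Rhat$ and hence for $R$.

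Part (b) is then a formal consequence: an AB ring is by definition a Gorenstein ring satisfying \ac\ (equivalently \uac). If $S$ is an AB ring, then in particular $S$ is Cohen--Macaulay and satisfies \acuac, so part (a) gives that $R$ is Cohen--Macaulay and satisfies \acuac; it remains to observe that $R$ is Gorenstein. This follows because Gorenstein descends along local homomorphisms of finite flat dimension—concretely, along the Cohen factorization $\dot{\f}$ is flat with Gorenstein (regular) closed fiber and $\f'$ is a surjection of finite projective dimension onto the Gorenstein ring $\Shat$, so $R'$ is Gorenstein (a ring mapping onto a Gorenstein ring by a finite projective dimension surjection is Gorenstein), and then flatness with Gorenstein fiber forces $R$ Gorenstein. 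Combined with \acuac, this says $R$ is an AB ring.

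I expect the main obstacle to be organizing the reduction to the complete, artinian (or at least dualizing-module-possessing) case cleanly enough to apply \lemref{descent} and \corref{cmreg} without circularity—i.e.\ carefully tracking how $R$, $\Rhat$, and the rings in the Cohen factorization interact under completion and modding out regular sequences, exactly as in the proof of \thmref{completion}. The ring-theoretic descent facts (Cohen--Macaulay and Gorenstein descending along flat maps with good fibers and along finite-projective-dimension surjections) are classical, but I would want to cite them precisely; everything homological about \acuac\ is already packaged in \lemref{regelt}, \lemref{descent}, \thmref{completion}, and \corref{cmreg}.
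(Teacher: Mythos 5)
There is a genuine gap in your treatment of the surjection $\f'\colon R'\onto\Shat$. You claim that because $\f'$ has finite projective dimension and $\Shat$ is Cohen--Macaulay, the kernel of $\f'$ is generated by an $R'$-regular sequence, so that \corref{cmreg} applies. That ``standard fact'' is false: finite projective dimension of a surjection does not force the kernel to be generated by a regular sequence. For instance, if $R'=\pows{x_1,\dots,x_6}$ and $I$ is a height-three Gorenstein ideal minimally generated by five Pfaffians, then $R'/I$ is Cohen--Macaulay and $\pd[R']{(R'/I)}=3$ is finite, yet $I$ is not generated by a regular sequence. The Auslander--Buchsbaum formula you invoke controls $\pd[R']{\Shat}$ in terms of depths, but says nothing about the minimal number of generators of $\ker\f'$; asserting the kernel is a regular sequence is exactly the difference between a homomorphism of finite flat dimension and a c.i.\ homomorphism, which is the subject of the paper's \emph{ascent} theorem, not this descent theorem. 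The step is, however, unnecessary: since $\f'$ is a surjection of finite projective dimension, $\Shat$ is an $R'$-algebra of finite flat dimension with $\Shat/\m_{R'}\Shat\ne 0$, and $R'$ is complete (hence has a dualizing module) and Cohen--Macaulay by \cite[(3.10)]{AFH-94}; so \lemref{descent} applies directly to give that $R'$ satisfies \acuac. This is the route the paper takes.

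Two smaller points. First, your opening citation goes the wrong way: \prpcite[5.5]{LWCHHlc} is \emph{descent} along a faithfully flat map, i.e.\ it transfers \acuac\ from $\Shat$ down to $S$; to pass from $S$ satisfying \acuac\ to $\Shat$ satisfying it you need \thmref{completion} of this paper. Second, for the flat stage $\dot{\f}\colon R\to R'$ your reduction to $\Rhat$ and application of \lemref{descent} to the flat $\Rhat$-algebra $R'$ does work (and is a legitimate alternative), but it is more elaborate than needed: $\dot{\f}$ is faithfully flat, so \prpcite[5.5]{LWCHHlc} transfers \acuac\ from $R'$ to $R$ in one step, which is what the paper does. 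Part (b) is handled correctly in outline; the Gorenstein descent you need is \thmcite[(2.4)]{LLAHBF90}.
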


\begin{proof}
  Assume that $S$ is Cohen--Macaulay and satisfies \acuac, then
  $\Shat$ has the same properties; see \thmref{completion}.  Consider
  a Cohen factorization $R \to R' \to \Shat$ of the semicompletion
  $\grave{\f}$.  By \cite[(3.10)]{AFH-94} the rings $R'$ and $R$ are
  Cohen--Macaulay. The ring $R'$ is complete, hence it has a dualizing
  module, and by \lemref{descent} it satisfies \acuac. Finally, $R$
  satisfies \acuac\ by \prpcite[5.5]{LWCHHlc}; this proves part
  (a). For part (b) it suffices to note that if $S$ is Gorenstein,
  then $R$ is Gorenstein; see Avramov and Foxby
  \thmcite[(2.4)]{LLAHBF90}.
\end{proof}

The notation $\Rmk$ specifies that $k$ is the residue field of the local
ring $\Rm$.

\begin{lem}
  \label{lem:ascent}
  Let $\mapdef{\f}{\Rmk}{(S,\n,k)}$ be a local homomorphism of
  complete $k$-algebras. If $R$ is Cohen--Macaulay and satisfies
  \acuac, then $\f$ has a Cohen factorization $R \to R' \to S$,
  where $R'$ is Cohen--Macaulay and satisfies \acuac.
\end{lem}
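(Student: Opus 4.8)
The plan is to produce, by hand, a Cohen factorization of $\f$ whose middle ring is a power series ring over $R$; once that is available, both conclusions about $R'$ follow from \thmref{completion}.

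First I would pick a finite generating set $z_1,\dots,z_m$ for the maximal ideal $\n$ of $S$, set $R'=\pows[R]{X_1,\dots,X_m}$, take $\dot{\f}\colon R\to R'$ to be the canonical inclusion, and define $\f'\colon R'\to S$ by $\f'|_R=\f$ and $\f'(X_i)=z_i$. Then $\f=\f'\dot{\f}$; the map $\dot{\f}$ is flat with regular closed fiber $R'/\m R'\is\pows[k]{X_1,\dots,X_m}$; and $R'$ is complete local. So the one point that needs an argument is surjectivity of $\f'$, and here I would run the usual completeness argument. Since $\f$ is a homomorphism of $k$-algebras one has $S=\f(R)+\n$, so the subring $J:=\f'(R')$ satisfies $S=J+\n$, and substituting $S=\f(R)+\n$ into $\n=(z_1,\dots,z_m)S$ shows $\n\subseteq J+\n^2$; a routine bootstrap promotes this to $\n^j=(J\cap\n^j)+\n^{j+1}$ for all $j\ge1$, and because $J$, being the image of the complete local ring $R'$, is closed in $S$, one concludes $\n\subseteq J$ and hence $J=S$. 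Thus $R\xra{\dot{\f}}R'\xra{\f'}S$ is a Cohen factorization. (This is in effect the construction behind \cite[(1.1)]{AFH-94}, simplified by the fact that $R$ is already complete.)

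It then only remains to check that this $R'$ has the asserted properties. As a power series ring over the Cohen--Macaulay ring $R$, the ring $R'=\pows[R]{X_1,\dots,X_m}$ is Cohen--Macaulay, and applying \thmref{completion} once for each of the $m$ power series variables shows that $R'$ satisfies \acuac. I do not anticipate a genuine obstacle here: the only computational chore is the surjectivity of $\f'$ --- the standard topological Nakayama argument sketched above --- and the single conceptual point is to choose the middle term of the Cohen factorization in power series form, so that \thmref{completion} applies directly.
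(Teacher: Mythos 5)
Your proposal is correct and follows essentially the same route as the paper: take $R'=\pows[R]{Y_1,\dots,Y_n}$ with the variables sent to generators of $\n$, observe that this gives a Cohen factorization, and conclude via \thmref{completion}. The only (minor) difference is in verifying surjectivity of $R'\to S$: the paper notes that this map restricts on the subring $\pows[k]{Y_1,\dots,Y_n}$ to the surjection supplied by Cohen's structure theorem ($k$ being a Cohen ring for $S$), whereas you reprove that surjection directly by the standard completeness/closed-image argument --- both are fine.
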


\begin{proof}
  Let $y_1,\ldots,y_n$ be a generating set for $\n$. As the field $k$
  is a Cohen ring for $S$, in the sense of \cite[(1.0.2)]{AFH-94}, the
  inclusion $k \into S$ extends by the assignment $Y_i \mapsto y_i$ to
  a surjective homomorphism $\pi\colon \pows[k]{Y_1,\ldots,Y_n} \onto
  S$. Set $R' = \pows[R]{Y_1,\ldots,Y_n}$, then $R'$ is complete and a
  flat local extension of $R$ with regular closed fiber $R'/\m R' \is
  \pows[k]{Y_1,\ldots,Y_n}$. The homomorphism $\f$ extends by the
  assignment $Y_i \mapsto y_i$ to a homomorphism $R' \to S$; it is
  surjective because $\pi$ is surjective. Thus, $R \into R' \onto S$
  is a Cohen factorization of $\f$. If $R$ is Cohen--Macaulay and
  satisfies \acuac, then $R'$ has the same properties; see
  \thmref{completion}.
\end{proof}

Following \cite[3.1]{LLA99}, a local homomorphism $\mapdef{\f}{R}{S}$
is called \emph{c.i.}, for complete intersection, if there is a Cohen
factorization $R \to R' \to \Shat$ of the semicompletion $\grave{\f}$
in which the kernel of the surjection $R' \onto S$ is generated by an
$R'$-regular sequence.

Here is an ascent result for the \ac\ and \uac\ properties:

\begin{thm}
  Let $\mapdef{\f}{\Rmk}{(S,\n,k)}$ be a c.i.\ local homomorphism of
  $k$-algebras.
  \begin{prt}
  \item If $R$ is Cohen--Macaulay and satisfies \acuac, then $S$ is
    Cohen--Macaulay and satisfies \acuac.
  \item If $R$ is an AB ring, then $S$ is an AB ring.
  \end{prt}
\end{thm}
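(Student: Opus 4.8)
The plan is to reduce, by completion, to a situation where \lemref{ascent} applies, and then to strip off a regular sequence using \corref{cmreg}; part (b) will follow from part (a) once Gorensteinness is seen to ascend.

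First I would dispose of the passage to complete rings. By \thmref{completion} the ring $R$ is Cohen--Macaulay and satisfies \acuac\ precisely when $\Rhat$ is, and the analogous statement holds for $S$ and $\Shat$; moreover the completion $\widehat{\f}\colon\Rhat\to\Shat$ is again a c.i.\ local homomorphism of complete $k$-algebras with residue field $k$ (see \cite{LLA99}). Hence for part (a) it is enough to treat $\widehat{\f}$, so I may assume that $R$ and $S$ are complete; then the semicompletion $\grave{\f}$ is simply $\f$. Now \lemref{ascent} hands me a Cohen factorization
\begin{equation*}
  R \xra{\dot\f} R' \xra{\f'} S
\end{equation*}
in which $R'$ is Cohen--Macaulay and satisfies \acuac. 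The crucial point is that, $\f$ being c.i., the kernel of the surjection $\f'$ is generated by an $R'$-regular sequence $\x$; this is where I invoke that the c.i.\ property of $\f$ does not depend on the chosen Cohen factorization of $\grave{\f}$, see \cite{LLA99}. Granting this, $S\is R'/(\x)$ is Cohen--Macaulay, being a quotient of a Cohen--Macaulay ring by a regular sequence, and $S$ satisfies \acuac\ by \corref{cmreg}. Transporting the conclusion back along \thmref{completion} proves part (a).

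For part (b), if $R$ is an AB ring then $R$ is Gorenstein---in particular Cohen--Macaulay---and satisfies \acuac, so part (a) already gives that $S$ is Cohen--Macaulay and satisfies \acuac. It remains to see that $S$ is Gorenstein, which follows because Gorensteinness ascends along c.i.\ homomorphisms: in the factorization $R\to R'\to S$ above the flat map $\dot\f$ has regular---hence Gorenstein---closed fiber, so $R'$ is Gorenstein, and therefore so is $S\is R'/(\x)$. Thus $S$ is a Gorenstein ring satisfying \acuac, that is, an AB ring.

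The step I expect to be the main obstacle---indeed the only nonformal ingredient---is the assertion that c.i.-ness of $\f$ is detected by \emph{every} Cohen factorization of its semicompletion, which is what allows the accommodating factorization produced by \lemref{ascent} to carry the regular-sequence structure needed to apply \corref{cmreg}. The remaining technical inputs (that $\widehat{\f}$ inherits the c.i.\ property, and that flat extensions with regular closed fiber and quotients by regular sequences preserve Gorensteinness) are standard facts from the theory of local homomorphisms.
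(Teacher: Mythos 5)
Your argument is correct and follows essentially the same route as the paper: complete, invoke \lemref{ascent} to get a Cohen factorization with $R'$ Cohen--Macaulay and satisfying \acuac, use the fact that the c.i.\ property is detected on any Cohen factorization (\cite[(3.3)]{LLA99}) to see the kernel is generated by a regular sequence, and finish with \corref{cmreg} and \thmref{completion}. The only divergence is in part (b), where you track Gorensteinness directly through the factorization (flat with regular closed fiber, then modulo a regular sequence) instead of citing that c.i.\ homomorphisms are Gorenstein and appealing to \cite[(3.11)]{AFH-94}; both are standard and equally valid.
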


\begin{proof}
  Assume that $R$ is Cohen--Macaulay and satisfies \acuac, then
  $\Rhat$ has the same properties; see \thmref{completion}.  By
  \lemref{ascent}, the semicompletion $\grave{\f}$ has a Cohen
  factorization $R \to R' \to \Shat$, in which $R'$ is Cohen--Macaulay
  and satisfies \acuac.  As $\f$ is c.i., the kernel of the
  surjection $R' \onto \Shat$ is generated by an $R'$-regular
  sequence; see \cite[(3.3)]{LLA99}. It follows from \corref{cmreg}
  that $\Shat$ is Cohen--Macaulay and satisfies \acuac\ and,
  therefore, $S$ is Cohen--Macaulay and satisfies \acuac; see
  \thmref{completion}. This proves part (a).

  For part (b), recall that c.i.\ homomorphisms are Gorenstein by
  \thmcite[(2.4)]{LLAHBF90}. Thus, if $R$ is Gorenstein, it then
  follows from \cite[(3.11)]{AFH-94} that $S$ is Gorenstein.
\end{proof}




\def\cprime{$'$}
 \providecommand{\MR}[1]{\mbox{\href{http://www.ams.org/mathscine%
t-getitem?mr=#1}{#1}}}
  \renewcommand{\MR}[1]{\mbox{\href{http://www.ams.org/mathscinet-getitem?mr=#%
1}{#1}}}
\providecommand{\bysame}{\leavevmode\hbox to3em{\hrulefill}\thinspace}
\providecommand{\MR}{\relax\ifhmode\unskip\space\fi MR }
\providecommand{\MRhref}[2]{%
  \href{http://www.ams.org/mathscinet-getitem?mr=#1}{#2}
}
\providecommand{\href}[2]{#2}


\begin{thebibliography}{10}

\bibitem{mas1}
Maurice Auslander, \emph{Selected works of {M}aurice {A}uslander. {P}art 1},
  American Mathematical Society, Providence, RI, 1999, Edited and with a
  foreword by Idun Reiten, Sverre O. Smal\o, and \O yvind Solberg.
  \MR{MR1674397}

\bibitem{MAsROB89}
Maurice Auslander and Ragnar-Olaf Buchweitz, \emph{The homological theory of
  maximal {C}ohen-{M}acaulay approximations}, M\'em. Soc. Math. France (N.S.)
  (1989), no.~38, 5--37, Colloque en l'honneur de Pierre Samuel (Orsay, 1987).
  \MR{MR1044344}

\bibitem{ifr}
Luchezar~L. Avramov, \emph{Infinite free resolutions}, Six lectures on
  commutative algebra (Bellaterra, 1996), Progr. Math., vol. 166, Birkh\"auser,
  Basel, 1998, pp.~1--118. \MR{MR1648664}

\bibitem{LLA99}
\bysame, \emph{Locally complete intersection homomorphisms and a conjecture of
  {Q}uillen on the vanishing of cotangent homology}, Ann. of Math. (2)
  \textbf{150} (1999), no.~2, 455--487. \MR{MR1726700}

\bibitem{LLAHBF90}
Luchezar~L. Avramov and Hans-Bj{\o}rn Foxby, \emph{Gorenstein local
  homomorphisms}, Bull. Amer. Math. Soc. (N.S.) \textbf{23} (1990), no.~1,
  145--150. \MR{MR1020605}

\bibitem{LLAHBF91}
\bysame, \emph{Homological dimensions of unbounded complexes}, J. Pure Appl.
  Algebra \textbf{71} (1991), no.~2-3, 129--155. \MR{MR1117631}

\bibitem{AFH-94}
Luchezar~L. Avramov, Hans-Bj{\o}rn Foxby, and Bernd Herzog, \emph{Structure of
  local homomorphisms}, J.~Algebra \textbf{164} (1994), no.~1, 124--145.
  \MR{MR1268330}

\bibitem{careil}
Henri Cartan and Samuel Eilenberg, \emph{Homological algebra}, Princeton
  Landmarks in Mathematics, Princeton University Press, Princeton, NJ, 1999,
  With an appendix by David A. Buchsbaum, Reprint of the 1956 original.
  \MR{MR1731415}

\bibitem{CFF-02}
Lars~Winther Christensen, Hans-Bj{\o}rn Foxby, and Anders Frankild,
  \emph{Restricted homological dimensions and {C}ohen-{M}acaulayness},
  J.~Algebra \textbf{251} (2002), no.~1, 479--502. \MR{MR1900297}

\bibitem{LWCHHlc}
Lars~Winther Christensen and Henrik Holm, \emph{Algebras that satisfy
  {A}uslander's condition on vanishing of cohomology},
Math.~Z. \textbf{265} (2010), no.~1, 21--40. \MR{MR2606948}

\bibitem{DEsJHr88}
David Eisenbud and J{\"u}rgen Herzog, \emph{The classification of homogeneous
  {C}ohen-{M}acaulay rings of finite representation type}, Math. Ann.
  \textbf{280} (1988), no.~2, 347--352. \MR{MR929541}

\bibitem{HBF77b}
Hans-Bj{\o}rn Foxby, \emph{Isomorphisms between complexes with applications to
  the homological theory of modules}, Math. Scand. \textbf{40} (1977), no.~1,
  5--19. \MR{MR0447269}

\bibitem{RCH93}
Raymond~C. Heitmann, \emph{Characterization of completions of unique
  factorization domains}, Trans. Amer. Math. Soc. \textbf{337} (1993), no.~1,
  379--387. \MR{MR1102888}

\bibitem{CHnDAJ03}
Craig Huneke and David~A. Jorgensen, \emph{Symmetry in the vanishing of {E}xt
  over {G}orenstein rings}, Math. Scand. \textbf{93} (2003), no.~2, 161--184.
  \MR{MR2009580}

\bibitem{CHnGJL02}
Craig Huneke and Graham~J. Leuschke, \emph{Two theorems about maximal
  {C}ohen-{M}acaulay modules}, Math. Ann. \textbf{324} (2002), no.~2, 391--404.
  \MR{MR1933863}

\bibitem{icirm}
Craig Huneke and Irena Swanson, \emph{Integral closure of ideals, rings, and
  modules}, London Mathematical Society Lecture Note Series, vol. 336,
  Cambridge University Press, Cambridge, 2006. \MR{MR2266432}

\bibitem{DAJLMS04}
David~A. Jorgensen and Liana~M. {\c{S}}ega, \emph{Nonvanishing cohomology and
  classes of {G}orenstein rings}, Adv. Math. \textbf{188} (2004), no.~2,
  470--490. \MR{MR2087235}

\bibitem{yos}
Yuji Yoshino, \emph{Cohen-{M}acaulay modules over {C}ohen-{M}acaulay rings},
  London Mathematical Society Lecture Note Series, vol. 146, Cambridge
  University Press, Cambridge, 1990. \MR{MR1079937}

\end{thebibliography}

\end{document}